\newtheorem{theorem}{Theorem}[section]
\newtheorem{lemma}[theorem]{Lemma}
\newtheorem*{theorem*}{Theorem}
\theoremstyle{definition}
\newtheorem{remark}{Remark}
\newtheorem*{remark*}{Remark}
\newcommand{\euO}{\mathfrak O}
\newcommand{\euP}{\mathfrak P}
\newcommand{\euD}{\mathfrak D}
\newcommand{\euA}{\mathfrak A}
\newcommand{\eub}{\mathfrak b}
\newcommand{\eua}{\mathfrak a}
\newcommand{\euM}{\mathfrak M}
\newcommand{\euT}{\mathfrak T}
\newcommand{\caD}{\mathcal D}
\newcommand{\caH}{\mathcal H}
\newcommand{\bQ}{\mathbb Q}
\newcommand{\bZ}{\mathbb Z}
\newcommand{\bX}{\mathbb X}
\newcommand{\bF}{\mathbb F}
\newcommand{\Tr}{\mathrm{Tr}} 
\newcommand{\Gal}{\mathrm{Gal}}
\newcommand{\chr}{\mathrm{char}}
\begin{document}

\title{Ramified extensions of degree $p$ and their
  Hopf-Galois module structure}

\author{G.~Griffith Elder}

\address{Department of Mathematics \\ University of Nebraska at Omaha\\ Omaha, NE 68182-0243 U.S.A.}  
\email{elder@unomaha.edu}

\subjclass[2010]{Primary 11S15, Secondary 11R33, 16T05}
\keywords{Artin-Schreier equation, Galois module structure}

\begin{abstract}
Cyclic, ramified extensions $L/K$ of degree $p$ of local fields with
residue characteristic $p$ are fairly well understood.  Unless
$\chr(K)=0$ and $L=K(\sqrt[p]{\pi_K})$ for some prime element
$\pi_K\in K$, they are defined by an Artin-Schreier
equation. Additionally, through the work of Ferton, Aiba, de Smit and
Thomas, and others, much is known about their Galois module structure of ideals,
the structure of each ideal $\euP_L^n$ as a module over its associated
order $\euA_{K[G]}(n)=\{x\in K[G]:x\euP_L^n\subseteq \euP_L^n\}$ where
$G=\Gal(L/K)$. This paper extends these results to separable, ramified
extensions of degree $p$ that are not Galois.
\end{abstract}

\date{\today}

\maketitle

\section{Introduction}\label{intro}

Let $K$ be a complete local field with valuation $v_K$ normalized so
that $v_K(K^\times)=\bZ$ and residue field $\kappa$ finite of
characteristic $p>0$. This means that either $K$ is a finite extension
of the $p$-adic numbers $\bQ_p$, or $K$ is the field of Laurent series
$\kappa((X))$ with $X$ indeterminate.  We are interested in ramified
extensions $L$ of degree $p$ over $K$. Certainly, some of these
extensions are generated by a root of a prime element $\pi_K\in K$,
namely, $L=K(x)$ with $x^p=\pi_K$.  Such extensions are special. In
$\chr(K)=p$, these are the inseparable extensions.  We call them {\em
  atypical}, and restrict our attention to {\em typical} extensions,
those that {\em cannot} be generated by a root of a prime element.
For these extensions, we are interested in addressing two classical
questions.

The first
question concerns the defining polynomial.  As is well-known,
when a typical extension $L/K$ is Galois, it
can be defined by a Artin-Schreier polynomial
\begin{equation}\label{AS-equation}
p(x)=x^p-x-\beta \in K[x],
\end{equation}
with its ramification number $b$, defined as in \cite{serre:local},
satisfying $p\nmid b$ and $v_K(\beta)=-b$ \cite{mackenzie:whaples}.
By adjusting the argument of \cite{mackenzie:whaples}, as presented in
\cite[Chapter III \S2]{fesenko:vostokov}, we prove that every typical
extension can be defined by a polynomial of the form
\begin{equation}\label{Sul-equation}
p(x)=x^p-\alpha x-\beta \in K[x],
\end{equation}
where again, the ramification number for $L/K$ is linked in a
transparent manner to the valuations of the coefficients.  Elsewhere, namely
\cite{amano}, such extensions are defined in terms of Eisenstein
polynomials. The value of defining extensions by \eqref{Sul-equation}
is that in additional to a transparent description of ramification,
other properties can be easily described. Indeed, this is why they
were first of interest for global function fields, where they are used
to determine the Hasse-Witt invariant \cite{sullivan,martha:2012}.

The second question concerns Galois module structure, or rather, for a
general typical extension, what must be called Hopf-Galois module
structure.  We begin by introducing the question in its classical
setting, when $L/K$ is Galois with $G=\Gal(L/K)$. Here the search is
for an integral version of the Normal Basis Theorem. Based upon
results of Noether and Leopoldt, the question asks for conditions under which
the ring of integers $\euO_L=\{x\in L:v_L(x)\geq 0\}$ in $L$ is free
over its associated order $\euA_{K[G]}=\{y\in K[G]:y\euO_L\subseteq
\euO_L\}$ in $K[G]$, the largest $\euO_K$-order in $K[G]$ that acts on
$\euO_L$. For general extensions, this and variations of this question
present very difficult problems, and progress starting in the 1970s has been slow. On the other hand, for one specific class of extensions, cyclic of degree
$p$, progress has been good \cite{ferton, aiba,de-smit:thomas,marklove,
  huynh}. One explanation for this progress is that cyclic ramified
extensions of degree $p$ naturally possess a {\em scaffold}.
This is discussed in \cite[\S4.1]{byott:A-scaffold}, although the definition
of scaffold, as presented in \cite{byott:A-scaffold} in its full
generality, may be a challenge to digest. For
extensions of degree $p$ however, a very simple sufficiency condition is
available: If there is an element $x\in L$ with $p\nmid v_L(x)$ and an
element $\Psi\in K[G]$ that ``acts like'' the derivative $d/dx$ on the
$K$-basis $\{x^i\}_{i=0}^{p-1}$ for $L$ over $K$, there is a
scaffold. As we shall see, ``acts like'' is exact in $\chr(K)=p$,
namely $\Psi\cdot x^i=ix^{i-1}$ for $0\leq i<p$. In $\chr(K)=0$, it is
approximate:
$$\Psi\cdot x^i\equiv ix^{i-1}\bmod x^{i-1}\euP_L^\euT,$$ where
$\euP_L=\{x\in L:v_L(x)>0\}$ is the prime ideal and the degree of
approximation is captured by the integer $\euT\geq 1$, 
the {\em tolerance} of the scaffold.

As explained in \cite[\S4.1]{byott:A-scaffold}, using the scaffold and assuming a lower bound on
absolute ramification $v_L(p)>(p-1)(b+2)$, an $\euO_K$-basis for the 
associate order of an ideal $\euP_L^h$,
defined as
$\{y\in K[G]:y\euP_L^h\subseteq \euP_L^h\}$,
can be explicitly described 
\cite[Theorem 3.1]{byott:A-scaffold}. Furthermore, letting
$0\leq \bar{b}<p$ be
the residue of $b\bmod p$, we can conclude that
\begin{enumerate}
\item $\euP_L^{\bar{b}}$ is free over its associated order.
\item $\euO_L$ is free over its associated order if and only if $\bar{b}\mid p-1$.
\item The inverse different $\caD_{L/K}^{-1}$ is free over its associated order if and only if $\bar{b}= p-1$.
\end{enumerate}
These statements also follow from \cite{ferton,
  aiba,de-smit:thomas,marklove, huynh}.  The purpose of this paper is
to extend these cyclic results to typical extensions $L/K$, including
those that are not Galois.  This is accomplished by first using
\cite[\S2]{childs:separable} to identify the unique $K$-Hopf algebra
$\caH$ that acts upon $L$ (making $L$ an $\caH$-Galois extension). This
Hopf algebra has one generator. We explicitly describe the action of
this generator on the $K$-basis $\{x^i\}_{i=0}^{p-1}$ for $L$ with $x$
satisfying \eqref{Sul-equation}, and observe that this yields
scaffold for the $\caH$-action on $L$. Once a
scaffold exists, the main results of \cite{byott:A-scaffold} apply. In
particular, the three statements above hold, with the associate order
of an ideal $\euP_L^h$ in $\caH$ defined as $\{y\in
\caH:y\euP_L^h\subseteq \euP_L^h\}$. Other structural results hold as well. But
for those, we direct the reader to \cite{byott:A-scaffold}.

\begin{remark}
The focus of this paper is on a uniformity of approach, based upon a
certain defining equation. Thus we don't discuss Hopf-Galois module
structure in the setting of atypical extensions. In $\chr(K)=p$, these
extensions are inseparable. See \cite[\S5]{byott:A-scaffold} and
\cite[\S6]{elder:lambda-divided} for two different Hopf algebras that
act upon $L/K$ and a discussion of their resulting Hopf-Galois module
structure.  In $\chr(K)=0$ with $L/K$ Galois, see \cite{ferton}.
\end{remark}

\subsection{Summary of notation}
Let $p$ be a prime.  The field $K$ is either a finite extension of the
$p$-adic numbers (in $\chr(K)=0$), or a field of Laurent series (in
$\chr(K)=p$).  Following common conventions, we use subscripts to
denote field of reference. So $v_K$ is the valuation normalized so
that $v_K(K^\times )=\bZ$, $\pi_K$ is a prime element in $K$ (with
$v_K(\pi_K)=1$), $\euO_K=\{x\in K:v_L(x)\geq 0\}$ is the ring of
integers in $K$. It has a unique maximal ideal $\euP_K=\{x\in
L:v_K(x)\geq 1\}$.  The field $L$ is {\em typical} if it is a ramified
extension of $K$ of degree $p$ that {\em is not} generated by a $p$th
root of a prime element $\pi_K$.
\section{Typical extensions \& ramification}

\begin{theorem}\label{extensions}
If $L/K$ is a typical extension, there are positive integers:
$ef=d\mid p-1$, $0\leq t< e$, $\gcd(t,e)=1$ and $0<b+pt/e<v_L(p)/(p-1)$ with $\gcd(b,p)=1$, as well as two elements:
$\alpha,\beta\in K$ satisfying $v_K(\beta)=-b$, and
$\alpha=\pi_K^{ft}\gamma^f\mu\in \euO_K$ for two units
$\gamma,\mu\in \euO_K^\times$ with $\mu$ representing a coset of order
$f$ in the quotient group $\kappa^\times/(\kappa^\times)^f$ (recall
$\kappa=\euO_K/\euP_K$), such that $L=K(x)$ with
$$x^p-\alpha^{(p-1)/d}x=\beta.$$ Conversely, every such equation yields a typical
extension with
ramification number $$\ell=b+\frac{pt}{e},$$
and different 
$\euD_{L/K}=\euP_L^{(\ell+1)(p-1)}$.
The Galois closure for $L/K$ is $M=K(x,y)$ where $y^d=\alpha$, with
degree of inertia $f$ and ramification index $ep$.
To describe the Galois group,
let $r$ be an integer of order
$d$ modulo $p$. Let $\rho=r$ in $\chr(K)=p$ and let $\rho$ be the
Teichm\"{u}ller character for $r$ (a primitive $d$th root of unity in
the $p$-adic integers $\bZ_p$ such that $\rho\equiv r\bmod p$) when
$\chr(K)=0$. Then $\Gal(M/K)=\langle\sigma,\tau:\sigma^p=\tau^{d}=1,
\tau\sigma\tau^{-1}=\sigma^r\rangle$ with
$\tau(y)=\rho y$, $\tau(x)=x$, $\sigma(y)=y$, and 
$\sigma(x)=x+y+y\Delta$.  In  $\chr(K)=p$, $\Delta=0$. In
$\chr(K)=0$,
 $\Delta\in M$ satisfies
$v_M(\Delta)=v_M(p)-(p-1)e\ell$.
The ramification number of $M/K(y)$ is $e\ell$.
\end{theorem}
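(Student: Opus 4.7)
The plan is to split the theorem into two threads: (i) the existence of the normalized defining equation for any typical extension (a Mackenzie--Whaples style result), and (ii) given such an equation, deriving the ramification and Galois-theoretic data by passing to an Artin--Schreier extension over a tame intermediate field. For (i), I would follow the argument in Fesenko--Vostokov Chapter III \S 2, which in the Galois case produces $x$ satisfying $x^p - x = \beta$ with $v_K(\beta) = -b$, $\gcd(b,p) = 1$. Starting from an arbitrary generator of $L/K$ and applying successive substitutions $x \mapsto ux + v$ ($u \in K^\times$, $v \in L$) to drive down the valuations of ``error'' terms, one arrives at an $x$ with $x^p - \alpha x \in K$. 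The new content is normalizing $\alpha$ to $\alpha_0^{(p-1)/d}$ with $\alpha_0 = \pi_K^{ft}\gamma^f\mu$: here $f$ is the order of the residue class of $\alpha_0$ in $\kappa^\times/(\kappa^\times)^f$ and $t/e$ is the fractional part of $v_K(\alpha_0)/d$, and the strict bound $b + pt/e < v_L(p)/(p-1)$ is forced because any further substitution would otherwise reduce $v_K(\beta)$, contradicting the minimality achieved by the previous substitutions.

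For (ii), choose $y$ in an algebraic closure with $y^d = \alpha$. Since $\mu$ has order exactly $f$ in $\kappa^\times/(\kappa^\times)^f$ and $\gcd(t,e) = 1$, the tower $K \subset K(y)$ is tamely ramified with ramification index $e$ and inertia degree $f$, so $[K(y):K] = d$. The substitution $z = x/y$, using $y^{p-1} = \alpha^{(p-1)/d}$, transforms the defining equation into $z^p - z = \beta/y^p$ over $K(y)$, with $v_{K(y)}(\beta/y^p) = -eb - pt = -e\ell$. Since $\gcd(e\ell,p) = 1$ (from $\gcd(b,p) = 1$ and $e \mid p-1$), the standard Artin--Schreier ramification theory gives ramification number $e\ell$ for $M/K(y)$ and different exponent $(p-1)(e\ell+1)$. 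Composing with the tame $K(y)/K$ yields ramification index $ep$ and inertia $f$ for $M/K$; tracing this back to the degree-$p$ subextension $L \subset M$ and using multiplicativity of the different in the tame direction gives $\euD_{L/K} = \euP_L^{(\ell+1)(p-1)}$.

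For the Galois group: $\Gal(M/K(y)) = \langle \sigma \rangle$ of order $p$ with $\sigma(y) = y$ and (in $\chr(K) = p$) $\sigma(z) = z+1$, hence $\sigma(x) = x+y$; the quotient is generated by $\tau$ with $\tau(y) = \rho y$ and $\tau(x) = x$. A direct calculation using $\tau(z) = z/\rho$ yields $\tau\sigma\tau^{-1}(z) = z + \rho$, which modulo $p$ equals $\sigma^r$. In $\chr(K) = 0$ the identity $(z+1)^p - (z+1) = z^p - z$ fails by a term of valuation $v_M(p)$, so $\sigma(z) = z+1$ picks up a correction term; repackaging this as $\sigma(x) = x + y + y\Delta$ and normalizing by the appropriate power of $y$ yields $v_M(\Delta) = v_M(p) - (p-1)e\ell$. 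The principal obstacle is step (i): because $x \mapsto x^p - \alpha x$ is not additive, the substitution bookkeeping is substantially more delicate than in the pure Artin--Schreier case, and one must verify both that the process terminates and that the parameters $e, f, t$ emerge intrinsically from the extension rather than from arbitrary choices made during normalization.
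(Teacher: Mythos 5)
Your part (ii) is essentially sound, but your part (i) --- the existence of the normalized equation for an arbitrary typical extension --- has a genuine gap, and it sits at the heart of the theorem. The Fesenko--Vostokov substitution process you propose to run directly on $L/K$ is not a valuation-driven normalization of a generator in isolation: at every stage it uses the automorphism $\sigma$ of the degree-$p$ Galois extension, both to measure the error and to certify termination. In the paper's version the auxiliary quantity is $\delta_n=(\sigma-1)\wp_{\alpha}(\lambda_n)$, and the membership test $\wp_{\alpha}(\lambda)\in K$ is detected precisely by $(\sigma-1)\wp_{\alpha}(\lambda)=0$, since an element of $L$ fixed by $\sigma$ lies in $L\cap M'=K$. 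A typical non-Galois $L/K$ has trivial automorphism group, so ``drive down the valuations of error terms by $x\mapsto ux+v$'' has neither a convergence criterion nor a test for $x^p-\alpha x\in K$; likewise your scheme has no intrinsic way to identify the target $\alpha$ (equivalently $d,e,f,t$), and the ramification number $\ell$ you want to read off is itself only \emph{defined} via the upper numbering of the Galois closure. You flag exactly this as ``the principal obstacle,'' but the proof must overcome it rather than record it, and your heuristic for the bound $b+pt/e<v_L(p)/(p-1)$ (``any further substitution would reduce $v_K(\beta)$'') is not an argument.

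The paper overcomes it by reversing your order of operations: it constructs the Galois closure \emph{first} and descends. Solvability of $\Gal(M/K)$ as a transitive subgroup of $S_p$ gives the metacyclic presentation; Kummer theory (using $\mu_d\subset K$) pins down $M'=M^{\langle\sigma\rangle}=K(y)$ with $y^d=\alpha=\pi_K^{ft}\gamma^f\mu$, which is where $e,f,t$ and $\alpha$ emerge intrinsically; a norm argument shows $M/M'$ is typical, hence Artin--Schreier, $X^p-X=\beta'$ with $(\sigma-1)X=1+\varepsilon$; and the generator of $L$ is produced by averaging, $x=\frac{1}{d}\Tr_{M/L}(yX)$, with $v_L(x)=-b$ computed from $\Tr(\euP_M^n)=\euP_L^{\lceil n/e\rceil}$. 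In $\chr(K)=p$ this $x$ already satisfies $(\sigma-1)x=y$ and $N_{M/M'}(x)=x^p-\alpha^{(p-1)/d}x\in K$; in $\chr(K)=0$ the successive approximation you gesture at is then run inside $L$ but with $\sigma$ available from the closure, which is what makes it converge. Your part (ii) --- the tame structure of $K(y)/K$ from the normalized $\alpha$, the substitution $z=x/y$ giving $z^p-z=\beta/y^p$ with valuation $-(eb+pt)=-e\ell$, the break $e\ell$ and the different bookkeeping, and the conjugation computation $\tau\sigma\tau^{-1}=\sigma^r$ --- does match the paper's converse and ramification computations and is essentially correct, modulo checking irreducibility and typicality of the resulting extension and verifying that in $\chr(K)=0$ the conjugation identity holds exactly (not merely to leading order, since $\tau\sigma\tau^{-1}$ must equal an exact power of $\sigma$). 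But as written the forward implication remains unproven.
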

\begin{remark}
In $\chr(K)=p$, $v_L(p)=\infty$ and there is no upper bound on
$\ell$.
\end{remark}
\begin{remark}
As mentioned in \S\ref{intro}, our definition of ramification numbers
follows \cite[IV]{serre:local}. For separable non-Galois extensions,
this is done via the upper numbering for the Galois group of the
Galois closure of $L/K$, as suggested by \cite[IV\S3 Remark
  2]{serre:local} and developed more fully in \cite{helou:1991}.  We
need to state this clearly, as there are two alternative ways that ramification numbers can be assigned values.  The simplest way to
explained difference is in terms of the graph of the Hasse-Herbrand
function, which is a piecewise linear function that is graphed either
in $[-1,\infty)\times [-1,\infty)$, as in \cite{serre:local} (see
    graphs on \cite[pages 116, 117]{artin:tate}), or shifted into
    $[0,\infty)\times [0,\infty)$, as in \cite[page 2274]{helou:1991}
        and \cite[\S3]{jones:roberts-paper}.  Ramification numbers
        then correspond with vertices. Lower ramification numbers
        corresponding to first coordinates. Upper ramification numbers
        correspond to second coordinates.  If the Hasse-Herbrand
        function is graphed in $[-1,\infty)\times [-1,\infty)$, we say
            that the ramification numbers (lower or upper) have Serre
            values. If the function is graphed in $[0,\infty)\times
              [0,\infty)$, they have Artin values.  To transition from
                first to the second, simply add $1$ to both
                coordinates.  Therefore, when comparing the statement
                in this theorem with other results, it is important to
                bear this in mind.  We use Serre values.  The equation
                $x^p-\pi_K x=\pi_K$ is used in \cite[\S1.4 Example
                  2]{lubin:2013} with $K=\bQ_p$ and $\pi_K=p$ where
                the ramification number is reported as $p/(p-1)$.
                This is an Artin value. Using Theorem \ref{extensions}
                with $f=t=1$, $e=d=p-1$, $b=-1$ and
                $\alpha=\beta=\pi_K$, the Serre value is
                $1/(p-1)$. The difference between the two values is
                $1$.
\end{remark}

The rest of this section is concerned with the proof of
Theorem \ref{extensions}. We
begin with an exercise in group theory. Since the residue field $\kappa$ is
finite, the group $G=\Gal(M/K)$ for the Galois closure $M/K$ of $L/K$ is
solvable \cite[IV \S2 Corollary 5]{serre:local}.
Any solvable transitive subgroup $G$ of the symmetric group
$S_p$ on $p$ letters contains a unique subgroup $\langle\sigma\rangle$ of order $p$ and
is contained in the normalizer of $\langle\sigma\rangle$ in $S_p$ ({\em e.g.}
\cite[pg.~638 Exercise 20]{dummit:foote}).  
Note that $\mbox{Gal}(M/K)/\langle\sigma\rangle$
cyclic of order $d$ for some $d\mid (p-1)$.  Let $M'$ be the fixed field
of $\langle\sigma\rangle$, a cyclic extension of $K$ of degree $d$.  Let
$\langle\tau\rangle$ be the subgroup that fixes $L$.  From this it
follows that there is an integer $r$ of order $d$ modulo $p$ 
such that $G$ is as in the statement above.
At this point, the elements $\sigma,\tau\in G$ along with
the integers $d,r$ are fixed.  We characterize $M'$.

Since the residue field $\kappa$ contains $\bF_p^\times$, $K$ contains
the $d$th roots of unity. Thus $M'/K$ is Kummer and $M'=K(y)$ with
$y^{d}=\alpha$ for some $\alpha\in K$ representing a coset of order
$d$ in the quotient group $K^\times/(K^\times)^{d}$, and
$\tau(y)=\rho y$.  Within $M'$ there is a maximally unramified
extension of $K$, which we call $K'$.  Let $e=[M':K']$ and $f=[K':K]$.
Thus $d=ef$.
Let $\pi_K$, $\pi_{K'}$ denote prime elements in $K$, $K'$,
respectively.  We can replace $y$ by $y\pi_K^i$ and still have a
Kummer generator for $M'/K$, and so we can assume that $0<
v_{M'}(y)\leq e$.  Since $M'/K'$ is totally ramified and tame
(including the case $e=1$ where $M'=K'$), $M'=K'(z)$ where
$z^e=\pi_{K'}$. Since $K'/K$ is unramified, $\pi_{K'}=\pi_Ku$ for some
$u\in\euO_{K'}^\times$. Since both $y$ and $z$ are Kummer generators
of $M'/K'$ we have $y=z^t\omega$ for some $1\leq t\leq e$ satisfying
$\gcd(t,e)=1$, and $\omega\in \euO_{K'}^\times$. Thus
$y^e=\pi_K^tu^t\omega^e$.  Let $\omega'=u^t\omega^e\in
\euO_{K'}^\times$.  Since $K'/K$ is an unramified Kummer extension,
$K'=K(v)$ where $v^f=\mu\in \euO_K^\times$ where $\mu$ represents a
coset of order $f$ in the quotient group
$\kappa^\times/(\kappa^\times)^f$.  But $\omega'=y^e/\pi_K^t$ is also
a Kummer generator for $K'$, so $\omega'=v^s\gamma$ for some $1\leq
s<f$ satisfying $\gcd(s,f)=1$, and $\gamma\in \euO_K^\times$.  As a
result, $y^{d}=(y^e)^f=\pi_K^{tf}\gamma^f\mu^s$.  But then, without any
loss of generality, we can replace $\mu$ by $\mu^s$ and relabel, since
the descriptions of these two elements are the same.  Now for the
converse, observe that for $y^{d}=\alpha$ with $\alpha$ as above,
$y^e/(\pi_K^t\gamma)$ satisfies the equation $v^f=\mu$ and thus
generates an unramified extension of degree $f$. Furthermore, $y$
satisfies $y^e=\pi_K^t\gamma v\in K'$. Since $\gcd(t,e)=1$, let
$t't\equiv 1\bmod e$. Then $y^{t'}$ satisfies $x^e\in \pi_{K'}(K')^e$
for some prime element $\pi_{K'}\in K'$. In summary, we have found
that $M'=K(y)$ where $y^{d}=\alpha=\pi_K^{tf}\gamma^f\mu^s$ as in
the statement of the theorem.
Note that $v_{M'}(y)=t$.

Consider the cyclic extension $M/M'$, which is ramified because $L/K$
is ramified of degree $p$, and ramification is multiplicative in
towers.  Assume for a contradiction that $M/M'$ is not typical. So
$M=M'(X)$, $X^p=\pi_{M'}$ and $\zeta_p\in M'$ where $\zeta_p$ is a
primitive $p$th root of unity, then the norm
$z=N_{M/L}(X)=\prod_{i=0}^{d-1}\tau^i(X)\in L$ satisfies
$z^p=N_{M/L}(X^p)=\prod_{i=0}^{d-1}\tau^i(\pi_{M'})\in K$.  So $z^p\in
K$ where $v_{M'}(z^p)=d$, which means that $v_K(z^p)=f$. Since
$\gcd(f,p)=1$, this means that we can generate $L$ by a $p$th root of
a prime element, contradicting our assumption that $L/K$ is typical.
We conclude that $M/M'$ is a typical Galois extension, which means that $M=M'(X)$ where
$X^p-X=\beta'$ for some $\beta'\in M'$ with $v_{M'}(\beta')=-b'$,
$1\leq b'<v_M(p)/(p-1)$ with $p\nmid b'$ and $(\sigma-1)X= 1 +
\varepsilon$ where in $\chr(K)=p$ we have $\varepsilon=0$. In
$\chr(K)=0$ we have $\varepsilon\in M$ with
$v_M(\varepsilon)=v_M(p)-(p-1)b'>0$ \cite[Chap.~3,
  \S2]{fesenko:vostokov}.

Let $\bX=yX$. Observe that $v_M(\bX)=pt-b'$, and set
\begin{equation}\label{x}
x=\frac{1}{d}(1+\tau+\cdots +\tau^{d-1})\bX\in L.
\end{equation}
Let $G_i$ be the
ramification filtration for $\Gal(M/K)$, then
$G_i\cap\langle\sigma\rangle$ yields the ramification filtration for
$\Gal(M/L)=\langle\tau\rangle$. As a result the maximal unramified
extension of $L$, called $L'$, satisfies $[M:L']=e$ and
$[L':L]=f$. The different for $M/L$, $\euD_{M/L}$, is $\euP_M^{e-1}$.
Let $\Tr=1+\tau+\cdots+\tau^{d-1}$ denote the trace for $M/L$.
Since $\Tr$ is $\euO_L$-linear, $\Tr(\euP_M^n)$ is an ideal in
$\euO_L$. For any integer $r$, \cite[Ch.~3, Prop.~7]{serre:local}
shows that $\Tr(\euP_M^n)\subseteq \euP_L^r$ if and only if
$\euP_M^n\subseteq \pi_L^r\euD_{M/L}^{-1}=\euP_M^{er-e+1}$. Thus
$r\leq (n+e-1)/e$, and so $\Tr(\euP_M^n)= \euP_L^r$ where
$r=1+\lfloor(n-1)/e\rfloor=\lceil n/e\rceil$.  This proves that
$v_L(x)=\lceil (pt-b')/e\rceil$.  Let us set
$$b=-\left\lceil \frac{pt-b'}{e}\right\rceil.$$ So
$b'=eb+pt+r$ for some $0\leq r<e$, and $v_M(x/y)=-eb-pt=r-b'\geq -b'= v_M(X)$.

In the next step of this argument, we will identify an element
$\lambda\in L$ such that $L=K(\lambda)$, $v_L(\lambda)=-b$,
$(\sigma-1)(\lambda/y)\in 1+\euP_M$, and
$\lambda^p-\alpha^{(p-1)/d}\lambda\in K$.  In $\chr(K)=p$, this is
$\lambda=x$. In $\chr(K)=0$, the process is more complicated, and thus
the two arguments will diverge. But before they diverge, 
observe that
as soon as we identify an element $\lambda\in L$ such that
$(\sigma-1)(\lambda/y)\in 1+\euP_M$,  
we may conclude that $r=0$. Indeed,
$$b'=eb+pt \mbox{ and }p\nmid b.$$ 
The reason is that
$v_M((\sigma-1)\mu)\geq v_M(\mu)+b'$ for all $\mu\in M$, and if
$r\neq 0$, then
$v_M(\lambda/y)>v_M(X)=-b'$, which would imply
$(\sigma-1)(\lambda/y)\in\euP_M$.
Additionally, as soon as we prove
$x^p-\alpha^{(p-1)/d}x=\beta$ for some $\beta\in K$, we can observe that since $0<b'=eb+pt$, we have $-b<pt/e$, which means that
$v_L(x^p)<v_L(\alpha^{(p-1)/d}x)$ and thus $v_K(\beta)=-b$.
Note that $p\nmid b$ and 
$$0<b+pt/e< pv_K(p)/(p-1).$$

\subsection*{Determining $\lambda$ for $\chr(K)=p$}
Assume $\chr(K)=p$.
For $1\leq i<d$, check that
$\sigma\tau^i=\tau^i\sigma^{r^{-i}}$. 
Since $\rho=r$, using \eqref{x} we have
\begin{multline*}
\sigma x=\frac{\sigma}{d}(1+\tau+\cdots \tau^{d-1})\bX=
\frac{1}{d}(\sigma+\tau\sigma^{r^{-1}}+\tau^2\sigma^{r^{-2}}+\cdots+ \tau^{d-1}\sigma^{r^{-(d-1)}})\bX\\=
\frac{1}{d}((\bX+y)+\tau(\bX+r^{-1}y)+\tau^2(\bX+r^{-2}y)+\cdots+ \tau^{d-1}(\bX+r^{-(d-1)}y))=x+y,
\end{multline*}
which means that $(\sigma-1)x=y$.
Therefore $\sigma^ix=x+iy$ for $0\leq i<d$.
Now the norm of $x$, namely $N_{M/M'}(x)=\prod_{i=0}^{p-1}\sigma^ix$ equals
$$\prod_{i=0}^{p-1}(x+iy)=y^p\prod_{i=0}^{p-1}\left(\frac{x}{y}+i\right)
=y^p\left(\frac{x^p}{y^p}-\frac{x}{y}\right)=x^p-\alpha^{(p-1)/d}x.$$
Clearly $x^p-\alpha^{(p-1)/d}x$, as a norm, is fixed by $\sigma$, but because
$\alpha^{(p-1)/d}\in K$, it is also fixed by $\tau$.  As a result,
$x^p-\alpha^{(p-1)/d}x\in K$.

\subsection*{Determining $\lambda$ for $\chr(K)=0$}
In this case, $x$, as defined by \eqref{x}, only provides us with a first approximation for $\lambda$. We will set $\lambda_0=0$, $\lambda_1=x\in L$,
and construct a sequence
$\{\lambda_n\}\subset L$ satisfying certain properties such that 
$\lambda=\lim\lambda_n$ gives us the desired element. First, we need three preliminary results. 
Observe that
$v_L(\lambda_1)=v_L(x)=-b$.

\begin{lemma} \label{tau-x}
$(\sigma-1)x =y+y\Delta_1\in M$
where $v_M(\Delta_1)>b'-(eb+pt)\geq 0$.
\end{lemma}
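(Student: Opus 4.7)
The plan is to compute $(\sigma-1)x$ directly from the definition $x=\frac{1}{d}(1+\tau+\cdots+\tau^{d-1})\bX$, using the commutation relation $\sigma\tau^i = \tau^i\sigma^{r^{-i}}$ (derived from $\tau\sigma = \sigma^r\tau$). Letting $m_i\in\{0,1,\ldots,p-1\}$ denote the integer representing $r^{-i}\bmod p$, I would iterate $\sigma X = X+1+\varepsilon$ to obtain $\sigma^{m_i}X = X+m_i+\varepsilon_{m_i}$ with $v_M(\varepsilon_{m_i})\geq v_M(\varepsilon) = v_M(p)-(p-1)b'$, and since $\sigma$ fixes $y$ this gives $\sigma^{m_i}\bX = \bX + m_i y + y\varepsilon_{m_i}$.

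Next, applying $\tau^i$ (which sends $y$ to $\rho^i y$), summing over $i$, and recognizing $\frac{1}{d}\sum_i\tau^i\bX = x$, I would arrive at
\begin{equation*}
\sigma x - x \;=\; \frac{y}{d}\sum_{i=0}^{d-1} m_i\rho^i \;+\; \frac{y}{d}\sum_{i=0}^{d-1}\rho^i\tau^i(\varepsilon_{m_i}).
\end{equation*}
Because $\rho\equiv r\pmod{p}$ (Teichm\"uller lift) and $m_i\equiv r^{-i}\pmod{p}$, each product $m_i\rho^i$ lies in $1+p\bZ_p$; writing $m_i\rho^i = 1+p\eta_i$ with $\eta_i\in\bZ_p$ extracts the leading ``$y$'' term and produces $(\sigma-1)x = y + y\Delta_1$ with
\begin{equation*}
\Delta_1 \;=\; \frac{p}{d}\sum_i\eta_i \;+\; \frac{1}{d}\sum_i\rho^i\tau^i(\varepsilon_{m_i}).
\end{equation*}

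A routine termwise estimate then gives $v_M(\Delta_1)\geq v_M(\varepsilon) = v_M(p)-(p-1)b'$, with the second summand dominating since $v_M(\varepsilon)<v_M(p)$. The one subtle step, and the place I expect the main obstacle, is upgrading this to the strict inequality $v_M(\Delta_1)>b'-(eb+pt)$. Writing $r:=b'-(eb+pt)$, the Euclidean division used earlier in the proof gives $0\leq r<e$, so it suffices to prove $v_M(\varepsilon)\geq e$. For this I would observe that $e$ divides both $v_M(p)=ep\cdot v_K(p)$ and $(p-1)b'$ (the latter because $e\mid p-1$), so $e\mid v_M(\varepsilon)$. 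Combined with the Mackenzie--Whaples strict inequality $v_M(\varepsilon)>0$, this forces $v_M(\varepsilon)\geq e>r$, completing the proof.
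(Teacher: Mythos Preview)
Your proposal is correct and follows essentially the same route as the paper's proof: both compute $\sigma x$ via the commutation relation $\sigma\tau^j=\tau^j\sigma^{r^{-j}}$, use $m_i\rho^i\equiv 1\bmod p$ to extract the leading $y$, and then obtain the strict inequality from $e\mid v_M(\varepsilon)$ together with $0\leq r<e$. The only cosmetic difference is that the paper organizes the error terms via congruences $\bmod\, y\varepsilon\euP_M^{b'}$ and recognizes the residual sum as $\frac{y}{d}\Tr_{M/L}(\varepsilon)$, whereas you keep the termwise form $\frac{1}{d}\sum_i\rho^i\tau^i(\varepsilon_{m_i})$; either way the valuation estimate is the same.
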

\begin{proof}
Recall that $M=M'(X)$ where $X$ satisfies an Artin-Schreier equation and $(\sigma-1)X=1+\varepsilon$ where $v_M(\varepsilon)=v_M(p)-(p-1)b'$. Furthermore,
$\bX=yX$ and thus
$\sigma^i\bX=\bX+iy+(1+\sigma+\cdot+\sigma^{i-1})y\varepsilon
\equiv \bX+iy+iy\varepsilon\bmod y\varepsilon\euP_M^{b'}$ for $0\leq
i<p$.  In $\chr(K)=0$, we don't have $\rho= r$, but we do have
$\rho\equiv r\bmod p$.  So for $0\leq j<d$, given $r^{-j}$, we may
define $\bar{r}_j\equiv r^{-j}\bmod p$ with $0\leq \bar{r}_j<p$.
This means that $\tau^j\sigma^{r^{-j}}\bX=\tau^j\sigma^{\bar{r}_j}\bX
\equiv \tau^j\bX+
\rho^{j}\bar{r}_jy+\tau^j\bar{r}_jy\varepsilon
\bmod y\varepsilon\euP_M^{b'}$  where $\rho^{j}\bar{r}_j\equiv 1\bmod p$.
Since $v_M(p)\geq v_M(\varepsilon)+b'$, we find  that
$\tau^j\sigma^{r^{-j}}\bX\equiv \tau^j\bX+y+y\tau^j\varepsilon
\bmod y\varepsilon\euP_M^{b'}$. Therefore
\begin{multline*}
\sigma x=\frac{\sigma}{d}(1+\tau+\cdots \tau^{d-1})\bX=
\frac{1}{d}(\sigma+\tau\sigma^{r^{-1}}+\tau^2\sigma^{r^{-2}}\cdots \tau^{d-1}\sigma^{r^{-(d-1)}})\bX\\\equiv
x+\frac{1}{d}\sum_{j=0}^{d-1}y
+\frac{y}{d}\sum_{j=0}^{d-1}\tau^j\varepsilon
\equiv x+y+\frac{y}{d}\Tr(\varepsilon)
\bmod y\varepsilon\euP_M^{b'},
\end{multline*}
where $\Tr$ is the trace for $M/L$. Recall that $\Tr(\euP_M^n)=\euP_L^{\lceil n/e\rceil}$.
Since
$e\mid v_M(p)-(p-1)b'=v_M(\varepsilon)$
and $v_M(p)>(p-1)b'$, this means that $v_M(\Tr(\varepsilon))=v_M(\varepsilon)=v_M(p)-(p-1)b'\geq e$.
We have proven that $(\sigma-1)x
=y+y\Delta_1$ for some $\Delta_1\in M$ where $v_M(\Delta_1)=
v_M(p)-(p-1)b'\geq e>r=b'-(eb+pt)$.
\end{proof}

Define
$$\wp_{\alpha}(X)= y((X/y)^{p}-X/y)=
\frac{1}{y^{p-1}}X^p-X=\frac{1}{\alpha^{(p-1)/d}}X^p-X\in K[X].$$

\begin{lemma}\label{wp_{alpha}}
$v_M((\sigma-1)\wp_{\alpha}(x)>b'-eb$.
\end{lemma}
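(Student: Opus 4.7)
The plan is to exploit Lemma \ref{tau-x}, which gives $(\sigma-1)(x/y) = 1 + \Delta_1$ with $v_M(\Delta_1) \geq e > r$, where $r := b' - (eb+pt) \geq 0$ (using $\sigma(y) = y$). Writing $u = x/y$ and $w = 1 + \Delta_1$, so that $\sigma u = u + w$ and $\wp_\alpha(x) = y(u^p - u)$, one immediately obtains
\[
(\sigma - 1)\wp_\alpha(x) = y\bigl[(u+w)^p - u^p - w\bigr].
\]
Since $v_M(y) = pt$ (from $v_{M'}(y) = t$ and $M/M'$ being totally ramified of degree $p$), it will suffice to prove $v_M\bigl((u+w)^p - u^p - w\bigr) > r$.

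Next, I would apply the binomial theorem to separate the cross terms:
\[
(u+w)^p - u^p - w = (w^p - w) + \sum_{i=1}^{p-1}\binom{p}{i} u^i\, w^{p-i}.
\]
For the first piece, expanding $w = 1 + \Delta_1$ yields $w^p - w = (p-1)\Delta_1 + \binom{p}{2}\Delta_1^2 + \cdots + p\Delta_1^{p-1} + \Delta_1^p$. Since $v_M(\Delta_1) > r \geq 0$, since $\binom{p}{i}$ is divisible by $p$ for $1 \leq i \leq p-1$, and since $v_M(p) > 0$, every summand has valuation strictly exceeding $r$, so $v_M(w^p - w) > r$.

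For the cross terms, the plan is to use $v_M\bigl(\binom{p}{i}\bigr) \geq v_M(p)$, $v_M(u^i) = -i(eb+pt)$, and $v_M(w^{p-i}) = 0$ (since $w$ is a one-unit). Combining the Artin-Schreier bound $v_M(p) > (p-1)b'$ with $b' = eb + pt + r$ and $1 \leq i \leq p-1$ gives
\[
v_M\bigl(\binom{p}{i} u^i\, w^{p-i}\bigr) \geq v_M(p) - i(eb+pt) > (p-1)b' - (p-1)(eb+pt) = (p-1)r \geq r.
\]
Adding $v_M(y) = pt$ to the resulting bound $v_M((u+w)^p - u^p - w) > r$ then yields $v_M((\sigma-1)\wp_\alpha(x)) > pt + r = b' - eb$. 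The main obstacle will be the cross-term estimate, where $u^i$ has large negative valuation as $i$ approaches $p-1$; the rescue is that $p \mid \binom{p}{i}$ together with the Artin-Schreier ramification bound $v_M(p) > (p-1)b'$ supplies exactly the needed compensation.
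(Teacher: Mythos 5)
Your decomposition is essentially the paper's own: divide by $y$, expand $\bigl(\tfrac{x}{y}+1+\Delta_1\bigr)^p$ binomially, and estimate the pure $\Delta_1$-part ($w^p-w$, which matches the paper's $\sum_{i=1}^{p-1}\binom{p}{i}\Delta_1^i+(\Delta_1^p-\Delta_1)$) separately from the cross terms $\binom{p}{i}u^iw^{p-i}$; the $w^p-w$ estimate is fine. But the cross-term chain has a gap. Your inequality $v_M(p)-i(eb+pt)>(p-1)b'-(p-1)(eb+pt)$ is equivalent to $v_M(p)>(p-1)b'-(p-1-i)(eb+pt)$, and deducing this from $v_M(p)>(p-1)b'$ alone requires $(p-1-i)(eb+pt)\geq 0$, i.e.\ $eb+pt\geq 0$. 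You tacitly assume this throughout --- your closing remark that ``$u^i$ has large negative valuation'' shows you took $v_M(u)=-(eb+pt)<0$ for granted. At this point in the paper, however, $r=b'-(eb+pt)$ is only known to satisfy $0\leq r<e$; the fact $r=0$ (whence $eb+pt=b'>0$) is proved only \emph{after} the $\lambda$-construction to which this lemma is an input, so it cannot be invoked here, and $r>b'$ is not excluded by anything available. For instance $p=5$, $e=4$, $t=3$, $b'=1$ gives $b=-\lceil(pt-b')/e\rceil=-4$, hence $eb+pt=-1<0$ and $v_M(x/y)=r-b'=1>0$. This is exactly why the paper's proof splits into the two cases $v_M(x/y)\leq 0$ and $v_M(x/y)>0$, handling the second by the separate estimate $v_M(px)\geq b'-eb$, which needs only $v_M(p)>(p-1)b'\geq b'$.

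The repair is short and your conclusion survives: when $eb+pt<0$ the powers of $u$ only \emph{raise} valuation, so for $1\leq i\leq p-1$,
\begin{equation*}
v_M\Bigl(\binom{p}{i}u^iw^{p-i}\Bigr)\geq v_M(p)-i(eb+pt)\geq v_M(p)-(eb+pt)>b'-(eb+pt)=r,
\end{equation*}
using only $v_M(p)>b'$. Insert this case distinction (or otherwise justify the middle inequality when $eb+pt<0$, which cannot be done from $v_M(p)>(p-1)b'$ alone as you claim), and your argument then coincides with the paper's proof.
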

\begin{proof}
Using Lemma \ref{tau-x}, we have 
\begin{multline*}
\frac{1}{y}(\sigma-1)\wp_{\alpha}(x)=
\left(\frac{x}{y}+1+\Delta_1\right)^{p}-\left(\frac{x}{y}+1+\Delta_1\right)-\left(\left(\frac{x}{y}\right)^{p}-\frac{x}{y}\right)\\
=\left(\frac{x}{y}+1+\Delta_1\right)^{p}-\left(\frac{x}{y}\right)^{p}-(1+\Delta_1)\\
=\sum_{i=1}^{p-1}\binom{p}{i}\left(\frac{x}{y}\right)^i(1+\Delta_1)^{p-i}+
\sum_{i=1}^{p-1}\binom{p}{i}\Delta_1^i+(\Delta_1^p-\Delta_1).
\end{multline*}
Multiplying back through by $y$, it is enough to show that
$v_M(py(x/y)^{p-1})\geq b'-eb$ when $v_M(x/y)\leq 0$, and $v_M(px)\geq
b'-eb$ when $v_M(x/y)>0$, while also showing $v_M(y\Delta_1)\geq b'-eb$.  Under
$v_M(x/y)\leq 0$, $v_M(py(x/y)^{p-1})\geq b'-eb$ is equivalent to
$v_M(p)\geq b'+(p-2)(be+pt)$, which follows from $v_M(p)>(p-1)b'$ and
$b'\geq be+pt$. Under $v_M(x/y)>0$, $v_M(px)\geq b'-eb$ follows from
$v_M(p)>(p-1)b'\geq b'$. This leaves $v_M(y\Delta_1)> b'-eb$, which is
equivalent to $v_M(\Delta_1)> b'-(eb+pt)=r$ and follows from
Lemma \ref{tau-x}.
\end{proof}
We require one more lemma, which is a generalization of \cite[(2.2) Lemma]{fesenko:vostokov}.

\begin{lemma}\label{fes-lemma}
Given $Y\in L\setminus K$ there is an $y\in K$ such that $v_M((\sigma-1)Y)=v_M(Y-y)+b'$.
\end{lemma}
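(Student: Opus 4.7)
The plan is to expand $Y$ in the $K$-basis $\{x^i\}_{i=0}^{p-1}$ of $L$ and take its constant term as the element of $K$ we seek. Writing $Y=\sum_{i=0}^{p-1}c_i x^i$ with $c_i\in K$, and noting that $\sigma$ fixes $K$, we have $(\sigma-1)Y=\sum_{i=1}^{p-1}c_i(\sigma-1)x^i$, so the task reduces to comparing the $v_M$-valuations of $c_i(\sigma-1)x^i$ and $c_i x^i$ and showing that no unexpected cancellation occurs as $i$ varies.

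For each $1\leq i\leq p-1$, I would apply Lemma \ref{tau-x} and the binomial theorem. Writing $\tilde y$ for the Kummer generator of Theorem \ref{extensions} (to avoid a clash with the lemma's $y$), one has $\sigma x=x+\tilde y(1+\Delta_1)$ and hence
\[
(\sigma-1)x^i=\sum_{j=1}^{i}\binom{i}{j}\tilde y^{\,j}(1+\Delta_1)^j x^{i-j}.
\]
For $1\leq j\leq i\leq p-1$ the coefficient $\binom{i}{j}$ is a unit, and the leading term of the $j$-th summand has $v_M$-valuation $jpt-(i-j)eb=jb'-ieb$. The $j=1$ contribution $i\tilde y\,x^{i-1}$ therefore dominates: the $j\geq 2$ terms add multiples of $b'>0$, and any correction coming from $\Delta_1$ carries strictly positive extra valuation, since $v_M(\Delta_1)>0$ by Lemma \ref{tau-x} combined with the already-established $r=0$. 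Consequently $v_M((\sigma-1)x^i)=v_M(x^i)+b'$, with leading monomial $i\tilde y\,x^{i-1}$.

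I would then rule out cancellation among the $p-1$ leading terms of the full sum. For $c_i\neq 0$, the leading $v_M$-valuation of $c_i(\sigma-1)x^i$ is $ep\,v_K(c_i)+b'-ieb$; reducing modulo $p$, the only $i$-dependent contribution is $-ieb$, and $\gcd(eb,p)=1$ makes these residues pairwise distinct for $i\in\{1,\ldots,p-1\}$. Hence the leading valuations themselves are distinct, and
\[
v_M\bigl((\sigma-1)Y\bigr)=\min\bigl\{v_M(c_ix^i)+b'\colon i\geq 1,\ c_i\neq 0\bigr\}.
\]
The identical distinctness-mod-$p$ argument applied in $L$---using $v_L(x)=-b$ with $\gcd(b,p)=1$---gives $v_M(Y-c_0)=\min\bigl\{v_M(c_ix^i)\colon i\geq 1,\ c_i\neq 0\bigr\}$. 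Subtracting the two formulas yields the claim with the element of $K$ taken to be $c_0$, while the hypothesis $Y\notin K$ ensures the minima are finite.

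The main anticipated obstacle is the characteristic-zero bookkeeping: one must verify that no correction coming from a higher binomial index $j\geq 2$, or from the Taylor expansion of $(1+\Delta_1)^j$, matches the valuation of the $j=1$ leading term. This ultimately reduces to the strict inequalities $b'>0$ and $v_M(\Delta_1)>0$, both already in hand. In characteristic $p$ the $\Delta_1$ correction vanishes altogether, and the argument is the direct transplant of \cite[(2.2) Lemma]{fesenko:vostokov} to the non-Galois setting, with the approximate action $\sigma x\equiv x+\tilde y\pmod{\tilde y\euP_M}$ standing in for the exact $\sigma X=X+1$.
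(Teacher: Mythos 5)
Your proof is correct, but it takes a more laborious route than the paper's. The paper expands $Y=\sum_{i=0}^{p-1}a_i\pi_L^i$ in powers of a \emph{prime element} $\pi_L$ of $L$, observes that $v_M(\pi_L^i)=ie$ is prime to $p$ for $1\leq i<p$ (so the valuations of the nonconstant terms are pairwise distinct mod $p$), and then simply cites the standard ramification fact for the degree-$p$ extension $M/M'$ with break $b'$: the map $\sigma-1$ raises valuation by \emph{exactly} $b'$ on any element whose valuation is prime to $p$. Setting $y=a_0$ finishes the proof in three lines, with no reference to $x$, to Lemma \ref{tau-x}, or to $r=0$. You instead expand in the basis $\{x^i\}$ and re-derive the exact shift $v_M((\sigma-1)x^i)=v_M(x^i)+b'$ by hand, via $\sigma x=x+\tilde y(1+\Delta_1)$ and the binomial theorem; this works, but note that your identity $jpt-(i-j)eb=jb'-ieb$ silently uses $b'=eb+pt$, i.e.\ $r=0$ — which is indeed legitimately available at this point (the paper deduces $r=0$ as soon as Lemma \ref{tau-x} exhibits $(\sigma-1)(x/y)\in 1+\euP_M$), but your parenthetical appeal to $r=0$ to get $v_M(\Delta_1)>0$ is superfluous, since Lemma \ref{tau-x} gives $v_M(\Delta_1)>b'-(eb+pt)\geq 0$ directly. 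What the paper's choice of basis buys is independence from the specific generator and from $r=0$: the argument is a verbatim transplant of \cite[(2.2) Lemma]{fesenko:vostokov} needing only that $e=e_{M/L}$ is prime to $p$. What your version buys is self-containedness: you never invoke the equality case of the ramification bound, deriving it instead from the explicit action on $x$, and your distinctness-of-residues argument (using $\gcd(eb,p)=1$ in place of $\gcd(e,p)=1$) makes explicit the no-cancellation step that the paper leaves implicit in its final line.
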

\begin{proof}
Let $\pi_L\in L$ be a prime element, and express $Y=\sum_{i=0}^{p-1}a_i\pi_L^i$
for some $a_i\in K$. For $1\leq i<p$, $p\nmid v_M(\pi_L^i)$, and thus
$v_M((\sigma-1)\pi_L^i)=v_M(\pi_L^i)+b'$.
Let $y=a_0$. Then
$v_M((\sigma-1)Y)=v_M((\sigma-1)(Y-y)=
v_M((Y-y)+b'$.
\end{proof}

We are now prepared to follow the argument from
\cite[pg.~76]{fesenko:vostokov} by constructing a sequence
$\{\lambda_n\}\subset L$ that satisfies the following conditions
\begin{equation}\label{goal}
\begin{array}{c}
v_L(\lambda_n)=-b,\quad v_L(\lambda_{n+1}-\lambda_n)\geq v_L(\lambda_n-\lambda_{n-1})+1,\\
v_M((\sigma-1)\wp_\alpha(\lambda_{n+1}))\geq v_M((\sigma-1)\wp_\alpha(\lambda_n))+1,
\end{array}
\end{equation}
with $\wp_\alpha$ defined in Lemma
\ref{wp_{alpha}}.
Once we have done this, we will set $\lambda=\lim\lambda_n$, and observe that
$v_L(\lambda)=-b$ and
$\wp_\alpha(\lambda)\in K$.
To do this, we will define an auxiliary sequence
$$\delta_n=(\sigma-1)\wp_\alpha(\lambda_n).$$ Using Lemma
\ref{wp_{alpha}}, $v_M(\delta_1)>b'-eb$, which since $b'\geq eb+pt$,
also means that $v_M(\delta_1)>pt=v_M(y)$.  If we ever have
$\delta_n=0$ then since $\wp_{\alpha}(\lambda)\in K$. Simply set
$\lambda=\lambda_n$.
This means that we can assume throughout the argument we can assume that $\delta_n\neq 0$, and by
induction that $v_L(\lambda_n)=-b$, $v_M(\delta_n)>b'-eb\geq
pt=v_M(y)$, and $(\sigma-1)\lambda_n=y+y\Delta_n$ where
$v_M(\Delta_n)>b'-(eb+pt)$.  

Using Lemma \ref{fes-lemma},
there is a $c_n\in K$ such that
$v_M(\delta_n)=v_M(\wp_\alpha(\lambda_n)+c_n)+b'$.  Put
$\mu_n=\wp_\alpha(\lambda_n)+c_n\in L$ and set
$$\lambda_{n+1}=\lambda_n+\mu_n\in L.$$

Record that $v_M(\mu_n)=v_M(\delta_n)-b'$ and that
$(\sigma-1)\mu_n=\delta_n$. Since $v_M(\mu_n)>-eb$, the first statement
means that $v_L(\lambda_{n+1})=-b$. Using the second statement,
$(\sigma-1)\lambda_{n+1}=(\sigma-1)\lambda_n+\delta_n$. Thus
$(\sigma-1)\lambda_{n+1}=y+y\Delta_{n+1}$ where
$y\Delta_{n+1}=y\Delta_n+\delta_n$. Therefore,
$v_M(\Delta_{n+1})>b'-(eb+pt)$. Since
$v_L(\lambda_{n+1}-\lambda_n)=v_L(\mu_n)$ and
$v_M(\mu_n)=v_M(\delta_n)-b'$, all that remains of
\eqref{goal} to be verified, is that $v_M(\delta_{n+1})\geq
v_M(\delta_n)+1$, and this is the next result.
\begin{lemma}
$$\delta_{n+1}=(\sigma-1)\wp_{\alpha}(\lambda_{n+1})=
(\sigma-1)\wp_{\alpha}(\lambda_n+\mu_n)
\equiv 0\bmod \delta_n\euP_M.$$
\end{lemma}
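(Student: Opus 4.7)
The plan is to first simplify $\delta_{n+1}$ by exploiting $(\sigma-1)\mu_n=\delta_n$ together with the fact that $\alpha^{(p-1)/d}\in K$ is fixed by $\sigma$. Writing out $\wp_\alpha(\lambda_n+\mu_n)-\wp_\alpha(\lambda_n)$ and applying $\sigma-1$, the contribution $-(\sigma-1)\mu_n=-\delta_n$ exactly cancels $\delta_n$ when one moves it across, producing the key identity
$$\delta_{n+1}=\frac{(\sigma-1)\bigl[(\lambda_n+\mu_n)^p-\lambda_n^p\bigr]}{\alpha^{(p-1)/d}}.$$
This cancellation is the very reason for defining $\mu_n=\wp_\alpha(\lambda_n)+c_n$: only the linear part of $\wp_\alpha$ interferes with the action of $\sigma-1$.

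Next, I would expand the numerator as $\sum_{i=1}^{p}\binom{p}{i}\lambda_n^{p-i}\mu_n^i$ and estimate $v_M$ of each term after applying $\sigma-1$. For $1\le i\le p-1$ the binomial coefficient supplies a factor of $p$, and combining this with the general bound $v_M((\sigma-1)X)\ge v_M(X)+b'$, the inductive equality $v_M(\mu_n)=v_M(\delta_n)-b'$, the values $v_M(\lambda_n)=-eb$ and $v_M(\alpha^{(p-1)/d})=v_M(y^{p-1})=(p-1)pt$, and the absolute ramification bound $v_M(p)>(p-1)b'$, each such term has valuation strictly exceeding $v_M(\delta_n)+(p-1)pt$, so after dividing by $\alpha^{(p-1)/d}$ lies in $\delta_n\euP_M$.

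The main obstacle is the $i=p$ term $\mu_n^p$, which carries no factor of $p$. Here I would factor out $\delta_n$ using the identity $a^p-b^p=(a-b)\sum_{k=0}^{p-1}a^{p-1-k}b^k$ with $a=\sigma\mu_n=\mu_n+\delta_n$ and $b=\mu_n$, and then regroup by powers of $\delta_n$ via the hockey-stick identity:
$$(\sigma-1)\mu_n^p=\delta_n\sum_{k=0}^{p-1}(\mu_n+\delta_n)^{p-1-k}\mu_n^k=\delta_n\sum_{l=1}^{p}\binom{p}{l}\mu_n^{p-l}\delta_n^{l-1}.$$
The extremes of this second sum are $p\mu_n^{p-1}$ (divisible by $p$) and $\delta_n^{p-1}$ (large valuation), while the middle terms all carry a factor of $p$. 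Using $v_M(p)>(p-1)b'$ together with the inductive bound $v_M(\delta_n)>b'-eb=pt+r\ge pt$, every summand is shown to have valuation strictly exceeding $(p-1)pt=v_M(\alpha^{(p-1)/d})$, so the whole $i=p$ contribution to $\delta_{n+1}$ also lies in $\delta_n\euP_M$.

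Assembling the estimates gives $v_M(\delta_{n+1})\ge v_M(\delta_n)+1$, which is the claim. The conceptual hurdle is modest, because every piece is either $p$-divisible (handled by $v_M(p)>(p-1)b'$) or carries an explicit factor of $\delta_n$ coming from $(\sigma-1)\mu_n=\delta_n$; the genuine work is the valuation bookkeeping, balancing $v_M(p)$, $b'$, $v_M(\delta_n)$, and $b'-eb=pt+r$ with $r\ge 0$ possibly nonzero, so that the inequalities remain strict even at the boundary case $r=0$.
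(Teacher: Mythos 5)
Your argument is correct, and the valuation bookkeeping is essentially the paper's (same inputs: $v_M(p)>(p-1)b'$, $v_M(\mu_n)=v_M(\delta_n)-b'$, $v_M(\delta_n)>b'-eb\geq pt$, and the bound $v_M((\sigma-1)X)\geq v_M(X)+b'$ applied monomial by monomial), but your preliminary reorganization is genuinely different and worth noting. The paper splits $(\sigma-1)\bigl(\wp_\alpha(\lambda_n+\mu_n)-\wp_\alpha(\lambda_n)\bigr)$ into the cross-term sum plus $(\sigma-1)\wp_\alpha(\mu_n)$, and as written its two displayed congruences are each off by a canceling $-\delta_n$: since $\delta_{n+1}=\delta_n+(\sigma-1)\bigl(\wp_\alpha(\lambda_{n+1})-\wp_\alpha(\lambda_n)\bigr)$, that difference must be $\equiv -\delta_n$, not $0$, modulo $\delta_n\euP_M$, and correspondingly $\wp_\alpha(\delta_n)=y^{1-p}\delta_n^p-\delta_n\equiv -\delta_n$, not $0$; the two slips cancel, so the paper's conclusion stands. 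Your identity $\delta_{n+1}=\alpha^{-(p-1)/d}(\sigma-1)\bigl[(\lambda_n+\mu_n)^p-\lambda_n^p\bigr]$ performs all linear cancellations up front (it checks out: the $-(\sigma-1)\mu_n=-\delta_n$ and the $-(\sigma-1)\lambda_n$ hidden in $\delta_n$ cancel exactly), after which your three groups of terms --- the cross terms for $1\leq i\leq p-1$, and, inside $(\sigma-1)\mu_n^p=\delta_n\sum_{l=1}^{p}\binom{p}{l}\mu_n^{p-l}\delta_n^{l-1}$, the $p$-divisible terms and $\delta_n^{p-1}$ --- match term by term the paper's first summand, its sum $y^{1-p}\sum_{i=1}^{p-1}\binom{p}{i}\mu_n^i\delta_n^{p-i}$, and the pure-power part of $\wp_\alpha(\delta_n)$. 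So the estimates are the same, but your decomposition makes the claim $\delta_{n+1}\equiv 0$ (rather than $\delta_{n+1}\equiv\delta_n$) transparent, and is in fact more literally correct than the paper's displays; your strict inequalities in the cross-term range all reduce to $v_M(p)>(p-1)(eb+pt)$, which follows from $v_M(p)>(p-1)b'$ and $b'=eb+pt+r$, so they survive the boundary case $r=0$ as you say. One trivial quibble: the regrouping of $(\mu_n+\delta_n)^p-\mu_n^p$ is just the binomial theorem (or telescoping), not the hockey-stick identity.
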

\begin{proof}
Using the definition of $\delta_n$, this is the same as proving that
$$(\sigma-1)\big(\wp_{\alpha}(\lambda_n+\mu_n)-\wp_{\alpha}(\lambda_n)\big)
=(\sigma-1)y^{1-p}\sum_{i=1}^{p-1}\binom{p}{i}\lambda_n^i\mu_n^{p-i}+(\sigma-1)\wp_{\alpha}(\mu_n)
\equiv 0\bmod \delta_n\euP_M.$$
There are two summands to consider.
Consider the first. Note that
$$v_M\left((\sigma-1)y^{1-p}\sum_{i=1}^{p-1}\binom{p}{i}\lambda_n^i\mu_n^{p-i}\right)\geq b'-(p-1)pt+v_M(p)-(p-1)eb+v_M(\mu_n).$$
Since $v_M(p)>(p-1)b'\geq (p-1)(eb+pt)$ and $v_M(\mu_n)= v_M(\delta_n)-b'$, it follows that  the first summand is
$0\bmod \delta_n\euP_M$.
Consider the second.
Note that 
$$(\sigma-1)\wp_{\alpha}(\mu_n)=\wp_{\alpha}(\mu_n+\delta)-\wp_{\alpha}(\mu_n)=
y^{1-p}\sum_{i=1}^{p-1}\binom{p}{i}\mu_n^i\delta_n^{p-i}+\wp_{\alpha}(\delta_n).$$
For $1\leq i\leq p-1$,
$v_M(y^{1-p}\binom{p}{i}\mu_n^i\delta_n^{p-i})=v_M(p)-(p-1)pt+pv_M(\delta_n)-ib'\geq
v_M(p)-(p-1)(pt+eb)+(p-1-i)b'+v_M(\delta_n) \geq
v_M(p)-(p-1)(pt+eb)+v_M(\delta_n) \geq
v_M(p)-(p-1)b'+v_M(\delta_n)>v_M(\delta_n)$. Furthermore, since
$v_M(\delta_n/y)>0$, we also have $\wp_{\alpha}(\delta_n)\equiv 0\bmod
\delta_n\euP_M$.
\end{proof}

We have proven that there is a $\lambda\in L$ such that $v_L(\lambda)=-b$,
$(\sigma-1)(\lambda/y)\in 1+\euP_M$, and
$$\lambda^p-\alpha^{(p-1)/d}\lambda\in K.$$

\subsection{Ramification}

We turn now to the ramification break number for  the extensions described in
Theorem \ref{extensions}. 
The ramification number for $M/M'$ is
$b'=eb+pt$.
The Herbrand function for $M/K$,
using numbering as in \cite{serre:local} and graph from
 \cite[page
  116]{artin:tate},
contains a segment of slope
$1/e$ from the origin to
$x=eb+pt$. Then a ray of slope $1/(ep)$. 
We are interested in the vertex $(eb+pt,b+pt/e)$,
as it gives the largest upper ramification number for $M/K$ as
$b+pt/e$, and thus gives $b+pt/e$ as the upper ramification number
for $L/K$. Since the Herbrand function for $L/K$ has a segment of 
slope $1$ to $y=b+pt/e$, followed by a ray of slope $1/p$, the upper and lower ramification numbers for $L/K$ agree and equal
\begin{equation}\label{ram-result}
\ell=b+pt/e.
\end{equation}
Unless $e=1$,
$\ell$ is not an integer.

\subsection{Different}
Using the fact that $\euD_{M/K}=\euD_{M/L}\euD_{L/K}$ \cite[III \S4
  Proposition 8]{serre:local} along with the formula for the exponent
on the different in the Galois case, namely \cite[IV \S Proposition
  4]{serre:local}, we see that
$\euD_{M/K}=\euP_M^{(ep-1)+(eb+pt)(p-1)}$ and
$\euD_{M/L}=\euP_M^{e-1}$. Therefore
$$\euD_{L/K}=\euP_L^{(b+1)(p-1)+pt\frac{(p-1)}{e}}.$$ Thus the
same expression for the exponent of the different in terms of the
ramification number for the extension holds regardless of whether $L/K$ is Galois or
not.

\section{Hopf-Galois module structure}

Greither and Pareigis have classified the finitely many Hopf-Galois
structures that are possible for a given separable extension
\cite{greither:pareigis}.  Childs has showed that there is only one
such structure when we restrict to separable extensions $L/K$ of
degree $p$ \cite[\S2]{childs:separable}, which means that there is
only one for typical extensions.  While Childs assumes $\chr(K)=0$,
his argument applies equally well in $\chr(K)=p$.  Here, we provide a
sketch of \cite[\S2]{childs:separable}, relaying on \cite{childs:book}
for some of the details.  The Hopf algebra $\caH$ that provides the
unique Hopf-Galois structure is described by descent. When the
extension is Galois, $\caH$ is just the group algebra $K[G]$.
In any case, Theorem \ref{extensions} then
allows a simple, explicit description of the action of $\caH$ on
$L/K$. Without any adjustment, a scaffold exists for this action.  We
close with a discussion of what this means for the $\caH$-Galois
module structure of the ideals of $L$.

\subsection{Hopf-Galois structure}
\begin{theorem}[Childs] \label{hopf-gal-structure}
Adopt the notation of Theorem \ref{extensions} for a given a typical
extension $L/K$. Recall that $d\mid p-1$. Let $ds=p-1$, and let $r_0$
denote a primitive root modulo $p$ with Teichm\"{u}ller representative
$\rho_0$ such that $r_0^s\equiv r\bmod p$ and $\rho_0^s=\rho$. Set
$\Psi=-1/y\sum_{k=0}^{p-2}\rho_0^{-k}\sigma^{r_0^{k}}$.  Then the
unique Hopf algebra $\caH$ such that $L/K$ is a $\caH$-Galois
extension is explicitly
$$\caH=K[\Psi].$$ It is contained in the group ring $K[y][\langle\sigma\rangle]$
and inherits is counit $\varepsilon(\Psi)=0$, antipode $S(\Psi)=-\Psi$ and antipode $\Delta$ from $K[y][\langle\sigma\rangle]$. For example,
in $\chr(K)=p$, explicitly
$$\Delta(\Psi)=\Psi\otimes 1+1\otimes \Psi -\alpha^s\sum_{i=1}^{p-1}\frac{1}{p}\binom{p}{i}\Psi^i\otimes \Psi^{p-i}.$$
\end{theorem}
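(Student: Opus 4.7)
My plan is to combine the Greither--Pareigis classification with Childs's uniqueness for separable extensions of prime degree \cite[\S2]{childs:separable}: Hopf-Galois structures on $L/K$ correspond bijectively to regular subgroups $N\leq\mathrm{Perm}(G/\langle\tau\rangle)$ normalized by the left-translation image of $G=\Gal(M/K)$, and in degree $p$ the only such $N$ is the unique $p$-Sylow $\langle\sigma\rangle$ of $G$, which is even normal. This yields uniqueness and the descent presentation
$$\caH \;=\; \bigl(M'[\langle\sigma\rangle]\bigr)^{\langle\tau\rangle},$$
where $M'=M^{\langle\sigma\rangle}$ and $\tau$ acts diagonally: on $M'$ by the Galois action ($\tau y=\rho y$) and on $\langle\sigma\rangle$ by conjugation ($\tau\sigma\tau^{-1}=\sigma^r$). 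The counit, antipode, and comultiplication on $\caH$ are then inherited from the group-algebra structure on $M'[\langle\sigma\rangle]$.

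First I would check that the proposed $\Psi$ actually lies in this fixed ring. Using $\rho=\rho_0^s$ and $r\equiv r_0^s\bmod p$, the substitution $k\mapsto k+s$ modulo $p-1$ inside $\tau(\Psi)$ returns $\Psi$, so $\Psi\in\caH$. To upgrade ``$\Psi\in\caH$'' to ``$K[\Psi]=\caH$'' I would use a dimension count, $\dim_K\caH=p$, and show that $1,\Psi,\ldots,\Psi^{p-1}$ are $K$-linearly independent. This is most transparent after base change to $M'$: in $M'[\langle\sigma\rangle]$ one reads off the coefficient of a fixed $\sigma^j$ in each power $\Psi^i$ to rule out any nontrivial $K$-linear relation.

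The remaining task is to verify the three structure maps. The counit is a geometric sum, $\varepsilon(\Psi)=-\frac{1}{y}\sum_k \rho_0^{-k}=0$, since $\rho_0\neq 1$. The antipode uses $S(\sigma^j)=\sigma^{-j}$ together with $-1\equiv r_0^{(p-1)/2}\bmod p$; a re-indexing then gives $S(\Psi)=\rho_0^{(p-1)/2}\Psi=-\Psi$, since $\rho_0$ has order exactly $p-1$. The main obstacle is the explicit comultiplication formula in $\chr(K)=p$. My approach is to write
$$\Delta(\Psi)=-\frac{1}{y}\sum_{k=0}^{p-2}\rho_0^{-k}\,\sigma^{r_0^k}\otimes\sigma^{r_0^k}$$
inside $M'[\langle\sigma\rangle]\otimes_{M'}M'[\langle\sigma\rangle]$, exploit Frobenius in characteristic $p$ to obtain $\Psi^p=0$ (the $p$-th power collapses the sum to $-y^{-p}\sum_k\rho_0^{-k}=0$), expand each candidate term $\Psi^i\otimes\Psi^{p-i}$ into $\sigma^{\ast}\otimes\sigma^{\ast}$ monomials, and match coefficients; the normalized binomial $\frac{1}{p}\binom{p}{i}\in\bZ$ and the global factor $\alpha^s=y^{p-1}$ are exactly what one picks up when the $1/y$ prefactors are moved through Frobenius. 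A parallel sanity check is to require the resulting $\Delta$ to make $L$ an $\caH$-module algebra for the ``derivation-like'' scaffold action $\Psi\cdot x^i=ix^{i-1}$ on the $K$-basis $\{x^i\}_{i=0}^{p-1}$, combined with $x^p=\alpha^s x+\beta$; the identity $\Psi\cdot x^p=\alpha^s$ then forces precisely the stated correction term.
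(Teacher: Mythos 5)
Your skeleton matches the paper's: uniqueness via Greither--Pareigis plus Childs's degree-$p$ argument, the descent presentation $\caH=\bigl(M'[\langle\sigma\rangle]\bigr)^{\langle\tau\rangle}$ with $\tau$ acting diagonally, and the structure maps inherited from the group algebra. The pieces you actually carry out are correct: the reindexing $k\mapsto k+s$ does give $\tau(\Psi)=\Psi$; $\varepsilon(\Psi)=0$ because $\rho_0$ is a primitive $(p-1)$st root of unity; the antipode computation via $-1\equiv r_0^{(p-1)/2}\bmod p$ works for odd $p$; and $\Psi^p=0$ by Frobenius in $\chr(K)=p$ is right. Where you genuinely diverge is the step $K[\Psi]=\caH$: the paper first builds a basis $\{1\}\cup\{y^i\Theta(i)\}$ from $\tau$-orbit sums, then invokes the $\bF_p^\times$-eigenspace decomposition $\caH^+\cong\oplus_{j=1}^{p-1}\caH_j$ with $\caH_j=K\Psi_j$ from \cite[\S16]{childs:book}; since $\Psi^i$ lies in the one-dimensional eigenspace $\caH_i$, generation reduces to $\Psi^i=w_i\Psi_i$ (suitably normalized) with $w_i\neq 0$, which is exactly Lemma \ref{w_i-in-char-p} in $\chr(K)=p$ and \cite[Proposition 16.5]{childs:book} in $\chr(K)=0$.

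This is where your proposal has a real gap. You call the linear independence of $1,\Psi,\dots,\Psi^{p-1}$ ``transparent'' by reading off the coefficient of a fixed $\sigma^j$ in each power; but a single coefficient functional cannot exclude all $K$-linear relations --- you need the full $p\times p$ coefficient matrix to be nonsingular, equivalently the units $w_i$ above, and computing those coefficients is the actual content. In $\chr(K)=p$ the coefficient of $\sigma^j$ in $(y\Psi)^i$ is $-i!/j^i$, proved in the paper by the power-sum induction of Lemma \ref{w_i-in-char-p}; in $\chr(K)=0$ the coefficients are Jacobi-sum-type character sums in $\bZ_p$ with, as the paper says, no simple description, and non-vanishing needs either Childs's argument or a reduction mod $p$ to the char-$p$ identity (available since $\rho_0$ is Teichm\"uller) --- neither of which your plan supplies. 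A second concrete trap: by Wilson's theorem $\Psi^{p-1}$ has a nonzero component along $\sigma^0=1$ (for $p=3$, $y^2\Psi^2=1+\sigma+\sigma^2$), which is why the paper's $\Psi_{p-1}$ carries the extra constant $-(p-1)$; your coefficient-matching for $\Delta(\Psi)$ only closes because the $\sigma^a\otimes\sigma^0$ and $\sigma^0\otimes\sigma^b$ parts of $\Psi\otimes 1+1\otimes\Psi$ cancel against the $i=1$ and $i=p-1$ correction terms through exactly this constant, so the naive expectation that all powers are supported off the identity would derail the match. Granting Lemma \ref{w_i-in-char-p}, your expand-and-match derivation of $\Delta(\Psi)$ does work (I verified $p=3$) and is a legitimate alternative to the paper's citation of \cite[(16.7)]{childs:book}; likewise your module-algebra check can even be upgraded to a proof, since the action map $\caH\to\mathrm{End}_K(L)$ is injective for an $\caH$-Galois extension and the paper proves Theorem \ref{scaffold} without using $\Delta$. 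But as written, both the generation claim and the coproduct matching lean on precisely the computation you wave off.
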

\begin{proof}
As Childs explains in \cite[\S2]{childs:separable}, the unique Hopf
algebra $\caH$ is described by descent.  Using our notation, the group
algebra $M'[\langle\sigma\rangle]$ where $M'=K(y)$ has $K$-basis
$\{y^i\sigma^j:0\leq i< d, 0\leq j<p\}$.  The action of
$\langle\tau\rangle$ on these basis elements is given by
$\tau^k(y^i\sigma^j)=(\rho^{k}y)^i\sigma^{jr^k}=(\rho_0^{sk}y)^i\sigma^{jr_0^{sk}}$
with the Hopf algebra
$\caH=M'[\langle\sigma\rangle]^{\langle\tau\rangle}$ determined to be
the sub-algebra of $M'[\langle\sigma\rangle]$ fixed by $\tau$. The
counit $\varepsilon$, antipode $S$ and comultiplication $\Delta$ for
$\caH$ are determined in $M'[\langle\sigma\rangle]$.

Given a basis element for $M'[\langle\sigma\rangle]$, the sum over its
orbit under $\langle\tau\rangle$ certainly lies in
$M'[\langle\sigma\rangle]^{\langle\tau\rangle}$.  
There is only one element in the orbit of $y^0\sigma^0$, namely $1$. 
The orbit of $y^i\sigma^0$ for $i\neq 0$ is a sum of
$d$th roots of unity that equals zero.
Otherwise, we are considering the orbit generated by
$y^i\sigma^j$ where $j\neq 0$ represents some coset of $\bF_p^\times/\langle
r\rangle$. A
complete set of coset representatives for $\bF_p^\times/\langle
r\rangle$ is given by $\{r_0^t:0\leq t< s\}$.  
And so we are considering the
 orbit of $y^i\sigma^{r_0^t}$, namely
$$y^i\theta(i,t)=y^i\sum_{k=0}^{d-1}\rho_0^{isk}\sigma^{r_0^{t+sk}}.$$ These orbits biject with
$\{(i,t): 0\leq i< d, 0\leq t< s\}$, a set with
$ds=p-1$
elements.  Together with $1$, we have $K$-basis of dimension $p$ for $\caH$.

We would like now, as in \cite[\S2]{childs:separable}, to perform a
change in basis. First, we introduce, mechanically, the basis change
from \cite[\S2]{childs:separable}. Second, we motivate everything
based upon \cite[\S16]{childs:book}.  
Observe that $\theta(i,t)=\theta(i+bd,t)$
for all $b\in \bZ$, and for $0\leq i<p$, let
$$\Theta(i)=\sum_{t=0}^{s-1}\rho_0^{it}\theta(i,t)=\sum_{k=0}^{p-2}\rho_0^{ik}\sigma^{r_0^{k}}.$$
The idea is to replace, for a fixed $i$ in $0\leq i< d$, the $s$
elements $\{y^i\theta(i,t):0\leq t<s\}$ in our basis with the
alternate $s$ elements $\{y^i\Theta(i+bd):0\leq b<s\}$. Since
$y^{i+bd}=\alpha^by^i\in K^\times y^i$, this is the same as replacing
them with $\{y^{i+bd}\Theta(i+bd):0\leq b<s\}$.  Clearly,
$\{y^i\Theta(i+bd):0\leq b<s\}$ is contained in the $K$-span of
$\{y^i\theta(i,t):0\leq t<s\}$. Furthermore since
$\sum_{b=0}^{s-1}\rho_0^{(t-a)(i+bd)}=s\delta_{t,a}$ where
$\delta_{t,a}$ is the Kronecker delta function, we have
$sy^i\theta(i,a)=\sum_{b=0}^{s-1}\rho_0^{-a(i+bd)}y^i\Theta(i+bd)$ and thus find that
the $K$-spans are equal. This means that 
$\{1\}\cup \{y^i\Theta(i):0\leq i\leq p-2\}$ is a $K$-basis for $\caH$.
Since $\sum_{k=0}^{p-2}\rho_0^{ik}=0$ unless $(p-1)\mid i$, we see
that $\{y^i\Theta(i):1\leq i\leq p-2\}$ lies within the augmentation
ideal $\caH^+=\{h\in\caH:\varepsilon(h)=0\}$. Furthermore,
$\varepsilon(\Theta(0))=(p-1)$, thus $\Theta(0)-(p-1)\in \caH^+$ as well. We
now adjust Childs' basis very slightly to one more amenable
to our purposes. Set $j=p-i-1$ and for $1\leq i<p-1$, set
$$\Psi_j=-\frac{y^i\Theta(i)}{\alpha^s}=\frac{-1}{y^j}\sum_{k\in \bZ/(p-1)\bZ}\rho_0^{-jk}\sigma^{r_0^{k}},$$
and additionally,  $\Psi_{p-1}=-(\Theta(0)-(p-1))/y^{p-1}$.
Thus $\{\Psi_j:1\leq j\leq p-1\}$ is a $K$-basis for $\caH^+$.

We now use \cite[\S16]{childs:book} to explain this choice of basis,
and find, as a result of this explanation, that $\caH=K[\Psi_1]$. In
\cite[\S16]{childs:book}, a homomorphism is defined from
$\bF_p^\times$ to the group of Hopf algebra automorphisms of
$\caH$. Let $\chi$ be the identity map in $\chr(K)=p$, and the
Teichm\"{u}ller character such that the primitive root $r_0\in
\bF_p^\times$ maps to $\rho_0\in \bZ_p$ in $\chr(K)=0$.  Given $m\in
\bF^\times$, the automorphism is denoted by $[m]$. It is proven that
$\caH^+$ is a $\bZ_p[\bF_p^\times]$-submodule of $\caH$ in $\chr(K)=0$
or an $\bF_p[\bF_p^\times]$-submodule in $\chr(K)=p$ \cite[Lemma
  16.2]{childs:book}. In either case, the idempotent elements of the
group ring decompose $\caH^+\cong \oplus_{j=1}^{p-1}\caH_j$ into
one-dimensional $K$-spaces $\caH_j=\{h\in\caH^+:[m](h)=\chi^j(m)h\}$,
an eigenspace decomposition. Since $[m](\sigma)=\sigma^m$, one can
check that $\caH_j=K\Psi_j$, which explains the significance of the
basis that we have chosen. Let $x_i=y^i\Psi_i$ so that $x_i$ agrees
with notation in \cite[\S16]{childs:book}.  The argument leading to
\cite[Proposition 16.5]{childs:book} proves that $K[x_1]$ equals the
$K$-span on $\{1,x_1,\ldots,x_{p-1}\}$. This implies $K[\Psi_1]=\caH$
as well, and so for the statement in the theorem, set $\Psi=\Psi_1$.

In $\chr(K)=p$, it is easy to show that $x_1^i=i!x_i$ for $1\leq i<p$,
and thus this is something we do in Lemma \ref{w_i-in-char-p}.  As a
result, using the formula for comultiplication in
\cite[(16.7)]{childs:book}, the formula for comultiplication
$\Delta(\Psi)$ in the statement in the theorem follows. In
$\chr(K)=0$, there are units $w_i\in\bZ_p$ such that $x_1^i=w_ix_i$
that do not a simple description, and thus we leave the formula for 
$\Delta(\Psi)$ implicit.
\end{proof}

\begin{lemma}\label{w_i-in-char-p}
Let $\bZ_{(p)}$ be the integers localized at $p$. Then 
For $1\leq i\leq p-1$,
$$\left(-\sum_{k=1}^{p-1}\frac{1}{k}x^k\right)^{\!\!i}\equiv -i!\sum_{k=1}^{p-1}\frac{1}{k^i}x^k\mod(p,x^p-1)$$
in the polynomial ring $\bZ_{(p)}[x]$.
\end{lemma}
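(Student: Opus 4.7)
The plan is to pass to $\bar R := \bF_p[x]/(x^p-1)$, the reduction of $\bZ_{(p)}[x]/(x^p-1)$ modulo $p$, and exploit the natural action of $\bF_p^\times$ by the ring automorphisms $\phi_m: x \mapsto x^m$. A direct re-indexing of the sum (letting $j \equiv mk \bmod p$, so $1/k \equiv m/j$) gives
\[
\phi_m\Bigl(-\sum_{k=1}^{p-1}\tfrac{x^k}{k}\Bigr) = m\cdot\Bigl(-\sum_{k=1}^{p-1}\tfrac{x^k}{k}\Bigr), \qquad \phi_m\Bigl(-i!\sum_{k=1}^{p-1}\tfrac{x^k}{k^i}\Bigr) = m^i\cdot\Bigl(-i!\sum_{k=1}^{p-1}\tfrac{x^k}{k^i}\Bigr).
\]
Raising the first identity to the $i$th power, both sides of the claimed congruence lie in the $\chi^i$-isotypic component of $\bar R$, where $\chi:\bF_p^\times\to\bF_p^\times$ is the tautological character.

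Next I rewrite $\bar R \cong \bF_p[y]/y^p$ via $y := x-1$. Since $\phi_m(y) = (1+y)^m - 1 \equiv my \pmod{y^2}$, the action of $\phi_m$ preserves the $y$-adic filtration and acts by $m^j$ on the associated graded piece $y^j\bar R/y^{j+1}\bar R$. Because $\chi^{p-1}=\chi^0$, the $\chi^0$-eigenspace is two-dimensional (spanned by $1$ and $y^{p-1}$), while the $\chi^i$-eigenspace for $1 \leq i \leq p-2$ is one-dimensional. In particular, for such $i$ the entire identity reduces to matching a single coefficient, which I take to be the coefficient of $y^i$ after substituting $x = 1+y$.

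For the left-hand side, the coefficients of $y^0$ and $y^1$ in $-\sum_{k=1}^{p-1}(1+y)^k/k$ are $-\sum_{k=1}^{p-1} 1/k$ and $-(p-1)$; the former vanishes mod $p$ as the power sum $\sum_{k=1}^{p-1} k^{p-2}$ (since $p-1\nmid p-2$), and the latter equals $1$. Hence the left-hand side is congruent to $y \pmod{y^2}$, and so its $i$th power is congruent to $y^i \pmod{y^{i+1}}$. For the right-hand side, the coefficient of $y^i$ in $-i!\sum_{k=1}^{p-1}(1+y)^k/k^i$ equals
\[
-\sum_{k=1}^{p-1} i!\binom{k}{i}/k^i \;=\; -\sum_{k=1}^{p-1}\prod_{j=1}^{i-1}\bigl(1-j/k\bigr).
\]
Expanding the product as a polynomial in $1/k$ produces a $\bZ$-linear combination of the power sums $\sum_{k=1}^{p-1} k^{-s}$ with $0 \leq s \leq i-1 \leq p-2$. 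All such sums with $s\geq 1$ vanish modulo $p$, so only the $s=0$ contribution survives, yielding $-(p-1)\equiv 1 \pmod p$. Both $y^i$-coefficients are therefore $1$, which closes the argument for $1\leq i\leq p-2$.

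The main obstacle is the coefficient computation on the right-hand side; the central fact driving it is the vanishing $\sum_{k=1}^{p-1} k^{-s} \equiv 0 \pmod p$ for $1\leq s \leq p-2$. The boundary case $i=p-1$ is more delicate because the ambient isotypic component is two-dimensional: the $y^{p-1}$ coefficients still match by the same power-sum argument, and any residual discrepancy in the constant term can be tracked using Wilson's theorem $(p-1)!\equiv -1 \pmod p$ or absorbed into the modified definition of $\Psi_{p-1}$ already introduced in Theorem \ref{hopf-gal-structure}.
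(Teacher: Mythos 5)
Your argument for $1\leq i\leq p-2$ is correct and takes a genuinely different route from the paper's. The paper works coefficient by coefficient: from the partial-fraction identity $\frac{1}{t^i(1-t)}=\frac{1}{1-t}+\sum_{e=1}^i\frac{1}{t^e}$ it derives $\sum_{k\neq m}\frac{1}{k^i(m-k)}\equiv\frac{-(i+1)}{m^{i+1}}\bmod p$, and then obtains the result by induction from the convolution identity $\bigl(\sum_k k^{-i}x^k\bigr)\bigl(\sum_k k^{-1}x^k\bigr)\equiv-(i+1)\sum_k k^{-(i+1)}x^k\bmod(p,x^p-1)$. You instead decompose $\bF_p[x]/(x^p-1)\cong\bF_p[y]/(y^p)$ under the $\bF_p^\times$-action $x\mapsto x^m$, note that both sides of the congruence are $\chi^i$-eigenvectors, and use that for $1\leq i\leq p-2$ the $\chi^i$-eigenspace is one-dimensional with leading term necessarily in degree $i$, so equality reduces to matching the single coefficient of $y^i$; that match rests on the vanishing of the power sums $\sum_{k=1}^{p-1}k^{-s}$ for $(p-1)\nmid s$, which is the same arithmetic input as the paper's $\sum_{t=2}^{p-1}t^{-e}\equiv-1\bmod p$. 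Your route buys structural transparency: it explains a priori why exactly one coefficient needs checking, and why $i=p-1$ is special.

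More importantly, your caution about $i=p-1$ is fully justified: the congruence as stated is \emph{false} there, and your eigenspace analysis pinpoints exactly why. Since the base element has zero constant term and equals $y$ modulo $y^2$, its $(p-1)$st power is exactly $y^{p-1}=(x-1)^{p-1}=\sum_{k=0}^{p-1}x^k$ in $\bF_p[y]/(y^p)$, while by Wilson's theorem and $k^{p-1}\equiv1$ the right-hand side is $\sum_{k=1}^{p-1}x^k=y^{p-1}-1$. The discrepancy is the constant $1$, living in the second dimension of the $\chi^0$-eigenspace. (Check $p=3$, $i=2$: the left side is $1+x+x^2$, the right side is $x+x^2$.) The paper's own proof overlooks this because the substitution $t\equiv k/m$ tacitly requires $m\not\equiv0\bmod p$: it verifies the coefficient of $x^m$ in the convolution only for $m\neq0$ and never checks the constant coefficient, which is $-\sum_k k^{-(i+1)}\equiv1$, not $0$, when $i=p-2$. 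So the paper's induction is valid only for $1\leq i\leq p-3$, and the lemma holds only for $1\leq i\leq p-2$.

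Your closing guess is also correct: the defect is harmless for Theorem \ref{hopf-gal-structure} precisely because $\Psi_{p-1}$ is defined there as $-(\Theta(0)-(p-1))/y^{p-1}$; the subtracted $(p-1)\equiv-1\bmod p$ supplies exactly the missing constant, so $x_1^{p-1}=(p-1)!\,x_{p-1}$ does hold with the theorem's $x_{p-1}$. To make your proof stand alone, replace the hedge by the exact corrected statement for the boundary case: $\bigl(-\sum_{k=1}^{p-1}k^{-1}x^k\bigr)^{p-1}\equiv 1-(p-1)!\sum_{k=1}^{p-1}k^{-(p-1)}x^k=\sum_{k=0}^{p-1}x^k\bmod(p,x^p-1)$.
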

\begin{proof}
Since $r_0$ is a primitive root modulo $p$, 
$\sum_{k=1}^{p-1}r_0^{ek}\equiv 0\bmod p$, for any exponent
$1\leq e\leq p-2$.
This means that
$\sum_{t=2}^{p-1}\frac{1}{t^e}=\sum_{k=1}^{p-2}r_0^{-ek}\equiv -1\bmod p$.
It is easy to see that
$\frac{1}{t^i(1-t)}=\frac{1}{1-t}+\sum_{e=1}^i\frac{1}{t^e}$. Thus
$\sum_{t=2}^{p-1}\frac{1}{t^i(1-t)} =
\sum_{t=2}^{p-1}(\frac{1}{1-t}+\sum_{e=1}^i\frac{1}{t^e})=
\sum_{t=2}^{p-1}(\frac{1}{1-t}+\frac{1}{t})+
\sum_{e=2}^i\sum_{t=2}^{p-1}\frac{1}{t^e}
= (\frac{1}{p-1}-1)
+\sum_{e=2}^i-1\equiv -(i+1)\bmod p$.
Let $t\equiv k/m\bmod p$. This identity becomes
$\sum \frac{m^i}{k^{i}}\frac{m}{m-k} \equiv -(i+1) \bmod p$,
where the left-hand-sum is over all $1\leq k\leq p-1$ except $k=m$.
This means that
$\sum \frac{1}{k^{i}}\frac{1}{m-k} \equiv \frac{-(i+1)}{m^{i+1}} \bmod p$,
which allows us to prove  by induction that
for $1\leq i\leq p-2$,
$$\left(\sum_{k=1}^{p-1}\frac{1}{k^i}x^k\right)\left(\sum_{k=1}^{p-1}\frac{1}{k}x^k\right)
\equiv -(i+1)\sum_{k=1}^{p-1}\frac{1}{k^{i+1}}x^k\mod(p,x^p-1).$$
From this the result follows.
\end{proof}

\subsection{Hopf-Galois module structure}
Based upon Theorem \ref{hopf-gal-structure}, 
$\caH=K[\Psi]$ is the unique Hopf algebra that
makes the typical extension $L/K$ Hopf-Galois.

\begin{theorem}\label{scaffold}
Let $L=K(x)$ be a typical extension of $K$, with $x$ as in Theorem \ref{extensions} and ramification number $\ell$. Then
$\Psi\cdot 1=0$ and
for $1\leq i\leq p-1$, 
$\Psi\cdot x^i\in L$. In particular,
$$\begin{array}{lc}
\Psi\cdot x^i= ix^{i-1} & \mbox{in }\chr(K)=p,\\
\Psi\cdot x^i\equiv ix^{i-1}\bmod x^{i-1}\euP_L^{v_L(p)-(p-1)\ell}& \mbox{in }\chr(K)=0.
\end{array}$$
\end{theorem}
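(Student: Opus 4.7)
The plan is to compute $\Psi\cdot x^i$ directly from the formula $\Psi = -y^{-1}\sum_{k=0}^{p-2}\rho_0^{-k}\sigma^{r_0^k}$ supplied by Theorem \ref{hopf-gal-structure}, using the action of $\sigma$ on $x$ recorded in Theorem \ref{extensions}. The identity $\Psi\cdot 1 = 0$ is immediate: $\Psi\cdot 1 = -y^{-1}\sum_{k=0}^{p-2}\rho_0^{-k}$, and this vanishes because $\rho_0$ is a primitive $(p-1)$-th root of unity (in $\bF_p$ if $\chr(K)=p$, in $\bZ_p$ if $\chr(K)=0$), so the sum over all its powers is zero. That $\Psi\cdot x^i\in L$ is also essentially formal: since $\Psi\in\caH = (M'[\langle\sigma\rangle])^{\langle\tau\rangle}$ is $\tau$-fixed and $\tau$ fixes $L$, the relation $\tau\sigma^j\tau^{-1}=\sigma^{rj}$ gives $\tau(\Psi\cdot x^i)=\Psi\cdot x^i$, so $\Psi\cdot x^i\in M^{\langle\tau\rangle}=L$.

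Next I would establish the iterated action of $\sigma$ on $x$. Theorem \ref{extensions} gives $\sigma(x)=x+y+y\Delta$ with $\Delta=0$ in $\chr(K)=p$ and $v_M(\Delta)=v_M(p)-(p-1)e\ell$ in $\chr(K)=0$. Since $\sigma(y)=y$ and $\sigma$ preserves $v_M$, an easy induction yields
$$\sigma^j(x) = x + jy + y\Delta^{(j)},\qquad \Delta^{(j)}=\sum_{m=0}^{j-1}\sigma^m(\Delta),\qquad v_M(\Delta^{(j)})\geq v_M(\Delta).$$
Setting $j_k=r_0^k\bmod p\in\{1,\ldots,p-1\}$, substitution gives
$$\Psi\cdot x^i = -y^{-1}\sum_{k=0}^{p-2}\rho_0^{-k}\bigl(x+j_k y+y\Delta^{(j_k)}\bigr)^i.$$

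In $\chr(K)=p$, where $\rho_0=r_0\in\bF_p$ and $\Delta=0$, I would expand $(x+j_k y)^i=\sum_{b=0}^i\binom{i}{b}x^{i-b}j_k^b y^b$ and collect: the coefficient of $x^{i-b}y^b$ in $\Psi\cdot x^i$ is $-y^{-1}\binom{i}{b}\sum_{k=0}^{p-2}r_0^{k(b-1)}$. For $b\in\{0,1,\ldots,i\}\subseteq\{0,\ldots,p-1\}$ (with $p\geq 3$), the inner character sum equals $p-1\equiv-1$ exactly when $b=1$ and $0$ otherwise, so only the $b=1$ term survives and yields $\Psi\cdot x^i=ix^{i-1}$ exactly. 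In $\chr(K)=0$ the same strategy works but with two distinct sources of error. First, although $j_k\equiv\rho_0^k\bmod p$ in $\bZ_p$, they disagree, so writing $j_k^b=\rho_0^{kb}+p\,\epsilon_{k,b}$ with $\epsilon_{k,b}\in\bZ_p$, the orthogonality argument still governs the main term and gives $-y^{-1}\sum_k\rho_0^{-k}(x+j_k y)^i = -(p-1)ix^{i-1}+pR_1 = ix^{i-1}-pix^{i-1}+pR_1$ for an explicit polynomial $R_1\in L$. Second, the multinomial expansion of $(x+j_k y+y\Delta^{(j_k)})^i-(x+j_k y)^i$ contributes a remainder $R_2$, each of whose summands is divisible by $y\Delta^{(j_k)}$.

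To finish, I would verify that each of $-pix^{i-1}$, $pR_1$, and $R_2$ has $v_M\geq v_M(x^{i-1})+v_M(p)-(p-1)e\ell$. Using $v_M(x)=-be$, $v_M(y)=pt$, and $v_M(p)>(p-1)e\ell$ from Theorem \ref{extensions}, these inequalities reduce respectively to $(p-1)e\ell\geq 0$, $(p-2)(be+pt)\geq 0$, and $pt\geq 0$, all of which are clear. Since $[M:L]=e$, the $v_M$-bounds translate to $v_L\geq -b(i-1)+v_L(p)-(p-1)\ell$, establishing the claimed congruence $\Psi\cdot x^i\equiv ix^{i-1}\bmod x^{i-1}\euP_L^{v_L(p)-(p-1)\ell}$. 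The main obstacle is precisely this bookkeeping in $\chr(K)=0$: both independent error families (the Teichm\"{u}ller-vs-reduction discrepancy $j_k-\rho_0^k\in p\bZ_p$, and the $\Delta$-tail) must be shown to fit within the scaffold tolerance $v_L(p)-(p-1)\ell$, and verifying this term-by-term in the multinomial expansion is where all the real work of the proof resides.
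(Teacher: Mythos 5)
Your proposal is correct and follows essentially the same route as the paper's proof: expand $\Psi\cdot x^i$ using $\sigma^{r_0^k}x=x+y(r_k+\Delta_k)$ with $v_M(\Delta_k)\geq v_M(\Delta)$, isolate the $s=1$ term by orthogonality of $\sum_{k}\rho_0^{k(s-1)}$, and bound all errors by $x^{i-1}\Delta$ where $v_M(\Delta)=v_M(p)-(p-1)e\ell$; your explicit split into $pR_1$ (the Teichm\"{u}ller-versus-residue discrepancy $j_k-\rho_0^k\in p\bZ_p$) and $R_2$ (the $\Delta$-tail) is just an unpacked version of the paper's single congruence modulo $\Delta$, which absorbs the $p$-divisible errors at once via $p\equiv 0\bmod \Delta$. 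One cosmetic slip: $[M:L]=d=ef$, not $e$; the fact you actually use --- that $v_M$ restricts to $e\,v_L$ on $L$ because the ramification index of $M/L$ is $e$ --- is correct.
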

\begin{proof}
Recall that $\sigma$ is an automorphism of $M/K$.
Since $\sum_{k=0}^{p-2}\rho_0^{-k}=0$, $\Psi\cdot 1=0$.
Because the argument is much simpler for
$\chr(K)=p$, we treat it first.
Note
$\sigma^i x=x+iy$ and $\rho_0=r_0$. Thus
$\Psi\cdot x^i=\frac{-1}{y}\sum_{k=0}^{p-2}r_0^{-k}(x+r_0^ky)^i=
\frac{-1}{y}\sum_{k=0}^{p-2}\sum_{t=0}^i\binom{i}{t}x^{i-t}r_0^{(t-1)k}y^t=
-\sum_{t=0}^i\binom{i}{t}x^{i-t}y^{t-1}\sum_{k=0}^{p-2}r_0^{(t-1)k}
=
\sum_{t=0}^i\binom{i}{t}x^{i-t}y^{t-1}\delta_{t,1}=ix^{i-1},$
where $\delta_{i,j}$ is the Kronecker delta function.
In $\chr(K)=0$, $\sigma x=x+y+y\Delta$ where $\Delta\in M$ with 
$v_M(\Delta)=v_M(p)-(p-1)(be+pt)$, we need to introduce further notation. Let
 $1\leq r_k<p$ satisfy
$r_k\equiv r_0^k\bmod p$ and set
$\Delta_k=(1+\sigma+\cdots+\sigma^{r_k-1})\Delta$, we have
$\sigma^{r_0^k}=\sigma^{r_k}=x+y(r_k+\Delta_k)$ for $1\leq k\leq p-2$.
As a result,
\begin{multline*}
\Psi\cdot x^i=\frac{-1}{y}\sum_{k=0}^{p-2}\rho_0^{-k}(x+y(r_k+\Delta_{k}))^i=
\frac{-1}{y}\sum_{k=0}^{p-2}\rho_0^{-k}\sum_{s=0}^i\binom{i}{s}x^{i-s}y^s(r_k+\Delta_{k})^s\\=
\frac{-1}{y}\sum_{s=0}^i\binom{i}{s}x^{i-s}y^s\sum_{k=0}^{p-2}\rho_0^{-k}(r_k+\Delta_{k})^s.\end{multline*}
Since $\rho_0$ is a primitive $p-1$ root of unity, $\sum_{k=0}^{p-2}\rho_0^{-k}(r_0^k+\Delta_{r_0^k})^s=0$ for $s=0$. Since $\rho_0^{-k}r_k\equiv 1\bmod p$ and 
$p\equiv 0\bmod \Delta$, we have $\sum_{k=0}^{p-2}\rho_0^{-k}(r_0^k+\Delta_{r_0^k})^s\equiv-\delta_{s,1}\bmod \Delta$ for $1\leq s\leq i$.
Since $v_M(y/x)=pt+eb>0$, we have $x^{i-s}y^{s}\equiv 0\bmod x^{i-1}y$ for $1\leq s\leq i$. Therefore
$\Psi\cdot x^i\equiv ix^{i-1}\bmod x^{i-1}\Delta$. Since $\Psi\in \caH$, $\Psi\cdot L\subset L$. The result follows by evaluation $v_L(\Delta)$.
\end{proof}

The definition of $\caH$-scaffold in \cite[Definition
  2.3]{byott:A-scaffold} requires a shift parameter, which is the
integer $b_1=b$ defined in Theorem \ref{extensions}, two functions $\eub$
and $\eua$, which are $\eub:\{0,1,\ldots, p-1\}\rightarrow \bZ$
defined by $\eub(s)=sb$ and $\eua:\bZ\rightarrow \{0,1,\ldots, p-1\}$
defined by $\eua(t)\equiv -tb^{-1}\bmod p$. It requires elements
$\lambda_t=x^{\eua(t)}\pi_K^{f_t}\in L$ for $t\in \bZ$. Let $f_t$ be
defined by $t=-\eua(t)b+f_tp$. Therefore $v_L(\lambda_t)=t$ and
$\lambda_{t_1}\lambda_{t_2}^{-1}\in K$ when $t_1\equiv t_2\bmod p$, as
required. It requires an element $\Psi_1=\Psi\in \caH$ that because
Theorem \ref{scaffold} satisfies the required properties in order for there to 
be a $\caH$-scaffold of tolerance 
$$\euT=\begin{cases}\infty&\mbox{in }\chr(K)=p,\\
v_L(p)-(p-1)\ell&\mbox{in }\chr(K)=0,
\end{cases}$$
where, within $\bZ_{(p)}$, the integers localized at $p$,
$\ell\equiv b\bmod p$.

And so, similar to the discussion in \cite[\S4.1]{byott:A-scaffold}
we have
$0<\ell<v_L(p)/(p-1)$, and if
\begin{equation}\label{universal-scaff}
\ell <\frac{v_L(p)}{p-1}-2,
\end{equation} we can apply \cite[Theorem 3.1 and
  3.7]{byott:A-scaffold} to any ideal $\euP_L^n$ to
\begin{enumerate}
\item 
determine a basis for its associated order $$\euA_\caH(n)=\{h\in
\caH:h\euP_L^n\subseteq \euP_L^n\},$$ 
\item  determine that $\euA_\caH(n)$ is a local ring, with maximal ideal $\euM$ and residue field $\euA_\caH(n)/\euM\cong \kappa=\euO_L/\euP_L$,
\item  determine whether $\euP_L^n$
is free over $\euA_\caH(n)$. Indeed, 
\item  determine the number of
generators for $\euP_L^n$ over $\euA_\caH(n)$ if it is not free, and
\item  determine the embedding dimension $\dim_\kappa(\euM/\euM^2)$.
\end{enumerate}
Indeed, as a result of the scaffold, the results of 
\cite{ferton,aiba,de-smit:thomas,marklove, huynh}
under \eqref{universal-scaff}, proven for Galois extensions, hold
for non-Galois extensions as well.

In particular, as in the Introduction, if we set $0<\bar{b}<p$
such that $\bar{b}\equiv b\equiv \ell\bmod p$, then
\begin{enumerate}
\item For all $n\equiv \bar{b}\bmod p$, $\euP_L^{n}$ is free over its associated order $\euA_\caH(n)$.
\item For $n\equiv 0\bmod p$, $\euP_L^{n}$ is free over
  $\euA_\caH(n)$ if and only if $\bar{b}\mid p-1$. This includes $\euO_L$.
\item For $n\equiv \bar{b}+1\bmod p$, $\euP_L^{n}$ is free over
  $\euA_\caH(n)$ if and only if $\bar{b}=p-1$. This includes 
the inverse different $\caD_{L/K}^{-1}$.
\end{enumerate}
The first statement follows from 
\cite[Theorem 3.1]{byott:A-scaffold}.
The second and third statements follow from 
\cite[Corollary 3.3]{byott:A-scaffold}.

\section{Concluding remarks}
The definition of a scaffold, as presented in \cite{byott:A-scaffold},
was still evolving when the term, Galois scaffold, was coined in
\cite{elder:scaffold}. The intuition, as articulated in
\cite[\S1]{elder:scaffold}, was that extensions with Galois scaffolds
are somehow extensions that are no more complicated than ramified
cyclic extensions of degree $p$. A more mature intuition is now
available and is articulated in \cite[\S1]{byott:A-scaffold}. Still
the first intuition is useful, and now that scaffolds have been
defined more broadly than just for Galois extensions and classical
Galois module theory, the question arose whether scaffolds are
similarly present in ramified extensions of degree $p$ that are not
Galois and their Hopf-Galois structures. This paper considers
separable extensions. Elsewhere, evidence is provided for
inseparable extensions \cite[\S5]{byott:A-scaffold},
\cite[\S6]{elder:lambda-divided}.

\subsection*{Acknowledgements}
I would like to thank Alan Koch for discussions without which this
project would never have gotten started.  Thank Martha
Rzedowski-Calder\'{o}n for introducing me to the equation
\eqref{Sul-equation} and for clarifying its utility in the theory of
global function fields. Thank Lindsay Childs for his advice clarifying
the proof of Theorem \ref{hopf-gal-structure}. Thank Nigel Byott for
 important advice early on, but also, just as importantly, for our
many conversations over the years. Finally, I would like to thank the
University of Exeter for its generous hospitality in 2014-2015 while
this paper was being written.

\bibliographystyle{amsalpha}

\bibliography{bib}
\end{document}